\DeclareMathOperator{\Ind}{Ind}
\DeclareMathOperator{\codim}{codim}
\newcommand{\ddd}{
\text{\begin{picture}(12,8)
\put(-2,-4){$\cdot$}
\put(3,0){$\cdot$}
\put(8,4){$\cdot$}
\end{picture}}}
\renewcommand{\le}{\leqslant}
\renewcommand{\ge}{\geqslant}
\newtheorem{theorem}{Theorem}[section]
\newtheorem{lemma}{Lemma}[section]
\theoremstyle{definition}
\newtheorem{definition}{Definition}[section]
\theoremstyle{remark}
\newtheorem{remark}{Remark}[section]
\newtheorem{corollary}{Corollary}[section]
\newcommand{\hide}[1]{}
\begin{document}

\title{Miniversal deformations of pairs of symmetric matrices under congruence}




\date{}

\author[um]{Andrii Dmytryshyn}
\ead{andrii@cs.umu.se}

\address[um]{Department of Computing Science, Ume\aa \, University, SE-901 87 Ume\aa, Sweden.
\newline 
\vskip0.2cm
\small{\rm Dedicated to Vladimir V. Sergeichuk on the occasion of his 70th birthday.}
\vskip-0.2cm}

\begin{abstract}
For each pair of complex symmetric matrices $(A,B)$ we provide a normal form with a minimal number of independent parameters, to which all pairs of complex symmetric matrices $(\widetilde{A},\widetilde{B})$, close to $(A,B)$ can be reduced by congruence transformation that smoothly depends on the entries of $\widetilde{A}$ and $\widetilde{B}$. Such a normal form is called a miniversal deformation of $(A,B)$ under congruence. A number of independent parameters in the miniversal deformation of a symmetric matrix pencil is equal to the codimension of the congruence orbit of this symmetric matrix pencil and is computed too. We also provide an upper bound on the distance from $(A,B)$ to its miniversal deformation.  
\end{abstract}

\begin{keyword}
Symmetric matrix pair\sep Symmetric matrix pencil\sep  Congruence canonical form\sep Perturbation \sep Versal deformation \sep Codimension

\MSC 15A21\sep 15A63
\end{keyword}

\maketitle

\section{Introduction}
Finding a normal form to which all matrices $\widetilde{A}$, close to a given matrix $A$, can be reduced by certain transformation which smoothly depends on the entries of the matrix $\widetilde{A}$ is a challenging problem. In 1971 V.I. Arnold introduced such a normal form for matrices under similarity \cite{Arno71}, see also \cite[\S\,30B]{Arno97}, and called it a (mini)versal deformation. The prefix ``mini-'' is added if the number of independent parameters in the normal form is minimal. 
%
%
Now the notion of miniversal deformations has been extended to general \cite{EdEK97,GaSe99} and structured \cite{Dmyt16,DmFS12,DmFS14} matrix pencils, matrices of bilinear \cite{DmFS12} and sesquilinear \cite{DmFS14} forms, as well as to matrices under similarity over various fields \cite{BoSS17,Gali72,GaSe99}.

In this paper, we derive a miniversal deformation of a pair $(A,B)$ of symmetric matrices ($A= A^T$ and $B = B^T$) under congruence; that is, a normal form with the minimal number of independent parameters, to which all pairs $(\tilde{A},\tilde{B})$ of symmetric matrices close to $(A,B)$ can be reduced by {\it congruence transformations} smoothly depending on the entries of $\tilde{A}$ and $\tilde{B}$. Recall that a pair of $n \times n$ symmetric matrices $(A,B)$ is called {\it congruent} to $(C,D)$ if and only if there is a nonsingular matrix $S$ such that $S^TAS = C$ and $S^TBS = D$. The set of pairs of matrices congruent to a pair of symmetric matrices $(A,B)$ is called a {\it congruence orbit} of $(A,B)$. 
The codimension of the congruence orbit of a pair of symmetric matrices is equal to the number of independent parameters in the miniversal deformation of this pair and is computed in this paper too. 
We also bound the distance from the deformations to unperturbed pairs of matrices in terms of the norm of the perturbations. 
When talking about the previous results, we will sometimes use the term ``matrix pencil'' instead of ``pair of matrices''  (in the context of this paper these terms are equivalent). 



Symmetric matrix pencils appear in a wide range of applications, including motion or vibration of structural systems \cite{Parl91,TiMe01}, viscous damping \cite{Dumo07}, network theory \cite{BeHK15}. Often symmetric matrix pencils appear as a result of symmetric linearizations for symmetric matrix polynomials \cite{AnVo04,HMMT07}. Many of these applications require computing eigenstructures of matrix pencils, for example, via a structured staircase form for symmetric matrix pencils \cite{BrMe07} as well as understanding the behaviour of these eigenstructures under low rank \cite{MeMW17} and general perturbations, and that is where our miniversal deformations may be useful \cite{Dmyt16,DFKK15,FuKS14}.  
%
Moreover, based on the versal deformation theory, a constructive approach to determine the geometry of the singularities (orientation in space, magnitudes of angles, etc.) by constructing tangential cones to the stability domain is developed in \cite{MaSe97,MaSe99}. Some applications of miniversal deformations in control and stability theories can be found in \cite{EdEK97,KaJJ12,Mail00}. In particular, miniversal deformations of symmetric matrix pencils can help us to construct their stratifications, i.e. closure hierarchies of orbits and bundles, see the examples in \cite{Dmyt16,DFKK15,FuKS14}. These stratifications are illustrated by the graphs showing all canonical forms that the symmetric matrix pencils may have in arbitrarily small neighbourhoods of a given symmetric matrix pencil.
For example, the stratifications show how a Jordan-like block can split into two Jordan-like blocks associated with two different eigenvalues. The stratification graphs are known for matrices \cite{EdEK99}, general matrix pencils \cite{EdEK99}, matrix pencils associated with state-space systems \cite{DmJK17}, matrix polynomials \cite{DJKV15,JoKV13}, as well as for the skew-symmetric matrix pencils \cite{DmKa14} and polynomials \cite{Dmyt17}. 
%
Nevertheless the stratification theory for symmetric matrix pencils remains to be an open and challenging problem and this paper can be seen as a step towards a better understanding of small perturbations of symmetric matrix pencils and thus towards a development of the stratification theory. 

This paper and the paper \cite{Dmyt16} are directed towards the same audience. To facilitate the reading and the use of results from \cite{Dmyt16}, we keep their style, structure, notation as similar as possible, and organize the rest of this paper as follows.
In Section $2,$ we start by recalling some preliminary information needed to present the miniversal deformations of symmetric matrix pencils. We also give an upper bound on the distance between a symmetric matrix pencil and its miniversal deformation as well as compute the codimensions of the congruence orbits of symmetric matrix pencils. (The Matlab functions for computing these codimensions were developed \cite{DmJK13} and became a part of the Matrix Canonical Structure (MCS) Toolbox \cite{Joha06}.)
In Section $3.1$ we present a method for constructing the miniversal deformations. In the remaining parts of Section $3$ we derive the deformations step by step, namely, for the diagonal blocks in Section $3.2$, for the off-diagonal blocks that correspond to the canonical summands of the same type in Section $3.3$, and finally, for the off-diagonal blocks that correspond to the canonical summands of different types in Section $3.4$.

In this paper all matrices are considered over the field of complex numbers. To refer to a matrix pair, we use calligraphic letters, e.g., ${\cal A }$ or ${\cal D}$.

\section{Miniversal deformations of pairs of symmetric matrices}
We start this section by recalling the canonical form of pairs of symmetric matrices under congruence given in \cite{Thom91}, then we present some preliminaries on miniversal deformations followed by our main theorems. The miniversal
deformations derived in Theorem \ref{teo2} will be proven in Section \ref{sec3}.    


For each $n=1,2,\dots $, define the $n\times n$
matrices
\begin{equation*}\label{1aa}
\Lambda_n(\lambda):=
\begin{bmatrix}
&&&\lambda\\
&&\lambda&1\\
&\ddd&\ddd&\\
\lambda&1&&\\
\end{bmatrix},\qquad
\Delta_n:=\begin{bmatrix}&&&1\\
&&1&\\
&\ddd&&\\
1&&&\\
\end{bmatrix},
\end{equation*}
where $\lambda \in \mathbb C$, and the $n \times (n+1)$ matrices
\begin{equation*}
F_n :=
\begin{bmatrix}
1&0&&\\
&\ddots&\ddots&\\
&&1&0\\
\end{bmatrix}, \qquad
G_n :=
\begin{bmatrix}
0&1&&\\
&\ddots&\ddots&\\
&&0&1\\
\end{bmatrix}.
\end{equation*}
All non-specified entries of the matrices $\Lambda_n(\lambda),\Delta_n,F_n,$ and $G_n$ are zeros. 

\begin{lemma}[\cite{Thom91}] \label{lkh}
Every pair of
symmetric complex
matrices is congruent
to a direct sum 
\begin{equation}\label{kus}
(A,B)_{\rm can}=
\bigoplus_{i=1}^{a} \mathcal  H_{h_i}(\lambda_i)
 \oplus
\bigoplus_{j=1}^{b} \mathcal K_{k_j}
 \oplus
\bigoplus_{r=1}^{c} \mathcal L_{l_r},
\end{equation}
where 
\begin{align}
\label{can}
\mathcal {\cal H}_n(\lambda)&:= (\Delta_n ,
\Lambda_n(\lambda) ),\quad \lambda \in\mathbb C,\\
\label{can2}
\mathcal {\cal K}_n&:= (\Lambda_n(0),\Delta_n ),\\
\label{can3}
\mathcal {\cal L}_n&:= \left(
\begin{bmatrix}0&F_n^T\\
F_n &0
\end{bmatrix},
\begin{bmatrix}0&G^T_n\\
G_n&0
\end{bmatrix}
 \right).
\end{align}
The sum \eqref{kus} is determined uniquely up
to permutation of summands.
\end{lemma}
\hide{
\noindent Thus, each pair of symmetric matrices is congruent to a direct sum
\begin{equation}\label{kus}
(A,B)_{\rm can}=
\bigoplus_{i=1}^{a} \mathcal  H_{h_i}(\lambda_i)
 \oplus
\bigoplus_{j=1}^{b} \mathcal K_{k_j}
 \oplus
\bigoplus_{r=1}^{c} \mathcal L_{l_r},
\end{equation}
consisting of direct summands of three
types {of pairs}.}



We extend Arnold's concept of miniversal deformations to pairs of symmetric matrices with respect to congruence in the same manner as it was done for pairs of skew-symmetric matrices \cite{Dmyt16}. Similarly miniversal deformations has been defined for matrix pencils \cite{EdEK97,GaSe99}, as well as for matrices of bilinear \cite{DmFS12} and sesquilinear \cite{DmFS14} forms. 

A \emph{deformation}
of a pair of symmetric $\hat{n}\times \hat{n}$ matrices $(A,B)$ is
a holomorphic mapping
${\cal A}(\vec\delta)$, where $\vec\delta=(\delta_1,\dots, \delta_k)$, from a
neighborhood
$\Omega \subset
\mathbb C^k$ of $\vec
0=(0,\dots,0)$ to the
space of pairs of
symmetric $\hat{n}\times \hat{n}$ matrices
such that ${\cal A}(\vec 0)=(A,B)$. In this paper we consider only symmetric deformations, i.e. we preserve the symmetric structure of matrix pairs. Thus, with no risk of confusion, we write ``deformation'' but not ``symmetric deformation''.    

\begin{definition}\label{d}
A deformation ${\cal
A}(\delta_1,\dots,\delta_k)$
of a pair of symmetric matrices 
$(A,B)$ is called
\emph{versal} if for every
deformation ${\cal
B}(\sigma_1,\dots,\sigma_l)$
of $(A,B)$ we have  
\begin{equation*}\label{kft}
{\cal B}(\sigma_1,\dots,\sigma_l)=
I(\sigma_1,\dots,\sigma_l)^{T}
{\cal
A}(\varphi_1(\vec\sigma),\dots,
\varphi_k(\vec\sigma))
I(\sigma_1,\dots,\sigma_l),
\end{equation*}
where $I(\sigma_1,\dots,\sigma_l)$ is a deformation of the identity matrix, 
and all
$\varphi_i(\vec\sigma)$
are convergent in a
neighborhood of $\vec
0$ power series such
that $\varphi_i(\vec
0)=0$. A versal
deformation ${\cal
A}(\delta_1,\dots,\delta_k)$
of $(A,B)$  is called
\emph{miniversal} if
there is no versal
deformation having
less than $k$
parameters.
\end{definition}

Define a \emph{$(0,*)$
matrix} to be a
matrix whose entries
are $0$ and $*$. 
A pair of symmetric matrices
\emph{is of the form}
$\cal D$, where $\cal D$ is a pair of $(0,*)$ symmetric matrices, if it can be
obtained from $\cal D$
by replacing the stars
with complex numbers, respecting the symmetry.
By ${\cal D}({\mathbb C})$ we denote the
space of all pairs of symmetric matrices 
of the form
$\cal D$, and by
${\cal D}(\vec
{\varepsilon})$ we denote the
pair of parametric symmetric 
matrices obtained from
${\cal D}$ by
replacing the {$(i,j)$th and $(j,i)$th
stars, $i\leq j$, with the parameter
${\varepsilon}_{ij}$ 
in the first matrix, and the $(i',j')$th and $(j',i')$th
stars, $i' \leq j'$, with the parameter ${\varepsilon}^{'}_{i'j'}$ 
in the second matrix.} That is to say  
\begin{equation} \label{a2z}
{\cal D}(\vec
{\varepsilon}):=
\left(\sum_{(i,j) \in  
\Ind_1({\cal D})}
\varepsilon_{ij}E_{ij},
\sum_{(i',j')\in \Ind_2({\cal D})}
{\varepsilon}^{'}_{i'j'}E_{i'j'}\right),
\end{equation}
\begin{equation} \label{spacedc}
{\cal D}(\mathbb C):= 
\left\{
{\cal D}(\vec{\varepsilon}) \ | \ \vec{\varepsilon} \in \mathbb C^k
\right\} = 
\left\{
\Big(\bigplus_{(i,j)\in 
\Ind_1({\cal D})}
{\mathbb C}
E_{ij}, 
\bigplus_{(i',j')\in
\Ind_2({\cal D})}
{\mathbb C}
E_{i'j'}\Big)
\right\},
\end{equation}
where each $E_{ij}$ is the matrix
whose $(i,j)$th and $(j,i)$th entries 
are $1$, and the other entries 
are zero, ``$\bigplus$'' denotes 
the entrywise sum of matrices,  and
$\Ind_1({\cal
D}),\Ind_2({\cal
D})\subseteq
\{1,\dots, \hat{n}\}\times
\{1,\dots, \hat{n}\},$
are the sets of
indices of the stars
in the upper-triangular parts (including the main diagonals) of the first and
the second matrices of the pair ${\cal D}$.
A miniversal deformation
of $(A,B)$ is
\emph{simplest} if it
has the form
$(A,B)+{\cal D}(\vec
{\varepsilon})$, where
$\cal D$ is a pair of $(0,*)$
matrices, see also \cite{Dmyt16,DmFS12,DmFS14,GaSe99}.

{ In other words,} for all pairs of $\hat{n} \times \hat{n}$ symmetric matrices
$(A+E,B+E')$ that are close to a given
pair of symmetric
matrices $(A,B)$,
we derive the normal form ${\cal A}(E,E')$ with respect to the congruence
transformation
\begin{equation} \label{trans}
(A+E,B+E') \mapsto S(E,E')^T(A+E,B+E') S(E,E') =: {\cal A}(E,E'),
\end{equation}
in which $S(E,E')$ is nonsingular and holomorphic at 0 (i.e. its
entries are power series in the entries of $E$ and $E'$
that are convergent in a neighborhood of 0). 

We define ${\cal A}(0,0)$ 
to be equal to the
congruence canonical form $(A,B)_{\text{\rm can}}$, see \eqref{kus}, of $(A,B)$.
Then
\begin{equation}
\label{ksy} {\cal
A}(E,E')=
(A,B)_{\text{\rm
can}}+{\cal D}(E,E'),
\end{equation}
where  ${\cal
D}(E,E')$ (${\cal D}(E,E')={\cal D}(\vec{\varepsilon}) $ for some $ \vec{\varepsilon} \in \mathbb C^k$) is a pair of symmetric matrices that is
holomorphic at $0$ and
${\cal D}(0,0)=(0,0)$.
In Theorem \ref{teo2} we
present ${\cal
D}(E,E')$ with the
minimal number of
nonzero entries that is attainable using the congruence transformation \eqref{trans}.

Define the following $(0,*)$
matrices, where each star
denotes a holomorphic at zero function of
the entries of $E$ and
$E'$:

$\bullet$ $0_{mn}$ is
the $m \times n$ zero
matrix;

$\bullet$
$0_{mn \ast}$
is the $m \times n$
matrix
$
\begin{bmatrix}
&  && 0\\
&0_{m-1,n-1}&& \vdots\\
&  && 0\\
0&\ldots&0&*
\end{bmatrix};
$

$\bullet$
$0_{mn}^{\leftarrow}$ (resp. $0_{mn}^{\rightarrow}$) 
is the
$m \times n$ matrix
$\label{bjhf}
\begin{bmatrix}
  * &  \\
   \vdots & 0_{m,n-1} \\
  * &
\end{bmatrix}$ $\left( \text{ resp. } 
\begin{bmatrix}
  & *  \\
  0_{m,n-1} & \vdots \\
  & *
\end{bmatrix}
\right)$;

\hide{
$\bullet$
$0_{mn}^{\rightarrow}$ is, respectively, the
$m \times n$ matrix  $
\begin{bmatrix}
  & *  \\
  0_{m,n-1} & \vdots \\
  & *
\end{bmatrix};
$}

$\bullet$
$0_{mn}^{\nwarrow}$
is the $m \times n$
matrix
\begin{equation}\label{mgde}
\begin{bmatrix}
\begin{matrix}
*\\ *\\\vdots\\ *
\end{matrix}
&0_{m,n-1}
\end{bmatrix}\quad
 \text{if $m\le n$, and }
\begin{bmatrix}
  *\ *\ \dots\ *
 \\[2mm]
   0_{m-1,n}
\\[2mm]
\end{bmatrix}\quad
\text{if $m\ge n$},
 \end{equation}
 
if $m=n$, then we can
choose any of the
matrices defined in
\eqref{mgde};

$\bullet$
$0_{mn}^{\righthalfcap}$
is the $m \times n$
matrix
$
\begin{bmatrix}
* & \ldots & *\\
&0_{m-1,n-1}& \vdots\\
&&*
\end{bmatrix}\quad
 \text{or}\quad
 \begin{bmatrix}
* & & \\
\vdots&0_{m-1,n-1}& \\
*&\ldots & *
\end{bmatrix};
$

\hide{
$\bullet$
$0_{nn}^{\nwarrow\!\!\!\! \text{\raisebox{1.5pt}{$\nwarrow$}} \!\!\!\! \text{\raisebox{3pt}{$\nwarrow$}}}$
is the $n \times n$
matrix (here and after unspecified entries are zeros)
$$
\left[\begin{array}{c|c}
\begin{matrix} *&*& \\
*& \ddots& \ddots \\
&\ddots&*
\end{matrix}& \begin{matrix}&&\\
&&\\
*&&
\end{matrix}\\
\hline
\begin{matrix}&&&*\\
&&&\\
&&&
\end{matrix}&\\
\end{array}\right]  \quad
 \text{when $n$ is even and}
\left[\begin{array}{c|c|ccc}
\begin{matrix} *&*& \\
*& \ddots& \ddots \\
&\ddots&*
\end{matrix}& \begin{matrix}\\
\\
*
\end{matrix}&&&\\
\hline
\begin{matrix}&&&*
\end{matrix}&*&&&\\
\hline
&&&&\\
&&&&\\
&&&&\\
\end{array}\right] \quad
 \text{when $n$ is odd};
$$
}

$\bullet$
$0_{nn}^{\nwarrow\!\!\!\! \text{\raisebox{1.5pt}{$\nwarrow$}} \!\!\!\! \text{\raisebox{3pt}{$\nwarrow$}}}$
is the $n \times n$ matrix { 
{\footnotesize
$
\setlength{\arraycolsep}{3pt}\begin{bmatrix}
*& *&  & &&&&\cdot\\
*& & \ddots    & & &&\cdot&\\
& \ddots & \ddots& & &\cdot&&\\
&& & *&*&&&\\
&&&*&0&0&&\\
&&\cdot&&0&0&\ddots&\\
&\cdot&&&&\ddots&\ddots&0\\
\cdot&&&&&&0&0\\
\end{bmatrix}
$} or {\footnotesize
$
\setlength{\arraycolsep}{3pt}\begin{bmatrix}
*& *&  & &&&\cdot\\
*& \ddots & \ddots     & &&\cdot&\\
& \ddots&  *   & * &\cdot&&\\
&&*&*&0&&\\
&&\cdot&0&0&\ddots&\\
&\cdot&&&\ddots&\ddots&0\\
\cdot&&&&&0&0\\
\end{bmatrix}
$}}

for $n$ being even or, respectively, odd; $\lceil n/2 \rceil$ stars on the main diagonal 

and $\lfloor n/2 \rfloor$ stars on each of the sub- and overdiagonals;

$\bullet$
$0_{nn}^{\nwsearrow\!\!\!\! \text{\raisebox{1.5pt}{$\nwsearrow$}} \!\!\!\! \text{\raisebox{3pt}{$\nwsearrow$}}}$
is the $n \times n$
matrix
$
 \begin{bmatrix}
*& *&  & \\
*& *& \ddots     & \\
& \ddots& \ddots    & * \\
&&*&*
\end{bmatrix};
$

$\bullet$ $0^{\boxminus}_{mn}$ with $m < n$
is the $m \times n$
matrix
\begin{equation*}\label{hui}
\begin{bmatrix}
\begin{matrix}
0&\dots& 0\\
\vdots&& \vdots
\end{matrix} &0_{n-1,n-m+1}\\
\begin{matrix}
0& \dots&0\end{matrix}&
\begin{matrix}
 *\ \dots\ * \ 0\
\end{matrix}
\end{bmatrix}\qquad(\text{$n-m$
stars}) 
\end{equation*}

if $m \geq n$ then $0^{\boxminus}_{mn}=0$.

\noindent If there is no risk of confusion, we will omit the indices $m$ and $n$.

Consider a canonical pair of
symmetric matrices under congruence
\begin{equation}\label{gto}
(A,B)_{\text{\rm \rm
can}}={\mathcal X}_1\oplus\dots
\oplus {\mathcal X}_t, 
\end{equation}
where
${\mathcal X}_1,\dots,{\mathcal X}_t$ are
pairs of the form
\eqref{can}--\eqref{can3}, and let 
${\cal D}(E,E')$ be a
pair of symmetric matrices, defined in \eqref{ksy}, whose
matrices are
partitioned into
blocks conformally to
the decomposition
\eqref{gto}, i.e.
\begin{equation}\label{grsd}
{\cal D}(E,E')= {\cal D} = \left(
\begin{bmatrix}
D_{11}&\dots&
D_{1t}
 \\
\vdots&\ddots&\vdots\\
D_{t1}&\dots&
D_{tt}
\end{bmatrix},
\begin{bmatrix}
D'_{11}&\dots&
D'_{1t}
 \\
\vdots&\ddots&\vdots\\
D'_{t1}&\dots&
D'_{tt}
\end{bmatrix}
\right), \ (D_{ji},D'_{ji})=(D_{ij}^T,D_{ij}^{'T}). 
\end{equation}
Define
\begin{equation}\label{lhsd}
{\cal D}(\mathcal X_i):=(D_{ii},D'_{ii}) \quad  \text{ and } \quad 
 {\cal
D}(\mathcal X_i, \mathcal X_j):=(
D_{ij},D'_{ij}), \ i<j.
\end{equation}

{ It is sufficient to construct miniversal deformations for all canonical pairs of matrices; i.e., for all direct sums} of the pairs \eqref{can}--\eqref{can3}, since each pair of symmetric matrices is congruent to its canonical form, see Lemma \ref{lkh}. 

\begin{theorem}\label{teo2}
Let $(A,B)_{\text{\rm \rm can}}$
be a canonical pair of symmetric complex
matrices 
\eqref{kus}. 
A simplest
miniversal deformation of $(A,B)_{\text{\rm can}}$
can be taken in the
form $(A,B)_{\text{\rm
can}} +{\cal D}$ in which ${\cal D}$ is a
pair of $(0,*)$ matrices (the stars denote
independent parameters, up to symmetry, see also Remark \ref{indep}) whose
matrices are partitioned into blocks conformally to
the decomposition of $(A,B)_{\text{\rm can}}$, see \eqref{grsd}, and 
the blocks of ${\cal D}$ are
defined, in the notation \eqref{lhsd}, as follows:  

{\rm(i)} The
diagonal blocks of
${\cal D}$ are defined
by
\begin{align}\label{Hdef}
{\cal
D}({\cal H}_n(\lambda))&=
\left( 0, 0^{\nwarrow\!\!\!\! \text{\raisebox{1.5pt}{$\nwarrow$}} \!\!\!\! \text{\raisebox{3pt}{$\nwarrow$}}} \right),\\
\label{Kdef}
{\cal D} ({\cal K}_n)&=\left(
0^{\nwarrow\!\!\!\! \text{\raisebox{1.5pt}{$\nwarrow$}} \!\!\!\! \text{\raisebox{3pt}{$\nwarrow$}}}, 0 \right),\\
\label{Ldef}
{\cal D}({\cal L}_n)&=\left(
 \begin{bmatrix}
0_{*}&0
 \\ 0&0
\end{bmatrix} ,
\begin{bmatrix}
0^{\nwsearrow\!\!\!\! \text{\raisebox{1.5pt}{$\nwsearrow$}} \!\!\!\! \text{\raisebox{3pt}{$\nwsearrow$}}}&0
 \\ 0&0
\end{bmatrix}
\right).
\end{align}

{\rm(ii)} The
off-diagonal blocks of
${\cal D}$ whose
horizontal and
vertical strips
contain summands of
$(A,B)_{\text{\rm
can}}$ of the same
type are defined by
\begin{align}\label{lsiu1}
{\cal D}
({\cal H}_n(\lambda),\,
{\cal H}_m(\mu))
       &=
  \begin{cases}
(0,\:0) &\text{if
$\lambda\ne\mu$,} \\
    \left(0,0^{\nwarrow}\right)
 &\text{if $\lambda=\mu$,}
  \end{cases}\\
\label{lsiu2}
{\cal D} ({\cal K}_n,{\cal K}_m)&=
  \left( 0^{\nwarrow} , 0 \right),\\
\label{lsiu3}
{\cal D}
({\cal L}_n,{\cal L}_m)&=\left(
 \begin{bmatrix}
0_{\ast}&0
 \\ 0&0
\end{bmatrix} ,
 \begin{bmatrix}
0^{\righthalfcap}&0^{\boxminus}_{n+1,m}
 \\ 0^{\boxminus T}_{m+1,n}&0
\end{bmatrix}
\right).
\end{align}
{\rm(iii)} The
off-diagonal blocks of
${\cal D}$ whose
horizontal and
vertical strips
contain summands of
$(A,B)_{\text{\rm
can}}$ of different
types are defined by:
\begin{align}\label{kut}
{\cal D}
({\cal H}_n(\lambda),{\cal K}_m)&=(0,0), \\
\label{hnlm}
{\cal D}
({\cal H}_n(\lambda),{\cal L}_m)&=\left(
0,0^{\leftarrow}\right),\\
\label{ktlm}
 {\cal D}
({\cal K}_n,{\cal L}_m)&=\left(
 \begin{bmatrix}
0^{\rightarrow}&0
\end{bmatrix} ,
0
\right).
\end{align}
\end{theorem}

\begin{remark}[Independency of parameters] \label{indep}
All parameters that are placed instead of the stars in the upper triangular parts, including the main diagonals, of matrices of ${\cal D}$ are independent and the strictly lower triangular parts of matrices of ${\cal D}$ are defined by the symmetry. For example, it means that parametric matrix pairs obtained from $(D_{ij},  D'_{ij})$ and $(D_{i'j'}, D'_{i'j'})$ have { dependent (actually, equal) parameter entries} if and only if $i'=j$ and $j'=i$.
\end{remark}


Now we explain how the matrix pair $\cal D$ defined in \eqref{grsd} is constructed in Section
\ref{sec3}. The vector space
\begin{equation*}\label{msi}
T_{(A,B)}:=\{C^T(A,B)_{\text{\rm
can}}
+(A,B)_{\text{\rm
can}}C\,|\,
C\in{\mathbb
C}^{\hat{n}\times \hat{n}}\}
\end{equation*}
is the tangent space
to the congruence
class of
$(A,B)_{\text{\rm
can}}$ at the point
$(A,B)_{\text{\rm
can}}$. 
\hide{
since
\[
(I+\varepsilon
C)^T(A,B)_{\text{\rm
can}} (I+\varepsilon
C) =(A,B)_{\text{\rm
can}}+ \varepsilon(C^T
(A,B)_{\text{\rm
can}}+
(A,B)_{\text{\rm
can}}C)
 \]
 \[+\varepsilon^2C^T
(A,B)_{\text{\rm
can}}C
\]
for all $\hat{n} \times \hat{n}$
matrices $C$ and each
$\varepsilon\in\mathbb
C$.} 
Then $\cal D$
satisfies the
following condition:
\begin{equation}\label{jyr}
{\mathbb C}^{\,\hat{n} \times
\hat{n} }_{s}\times{\mathbb
C}^{\,\hat{n} \times \hat{n}}_{s}=T_{(A,B)}
+ {\cal
D}({\mathbb C})
\end{equation}
where ${\mathbb
C}^{\,\hat{n} \times \hat{n}}_{s}$
is the space of all $\hat{n}
\times \hat{n}$
symmetric matrices and 
${\cal D}({\mathbb C})$, defined in \eqref{spacedc}, is
the vector space of
all matrix pairs
obtained from $\cal D$
by replacing its stars
by complex numbers.
Thus, the number of
stars in the uppertriangular part of $\cal D$ (including the main diagonal) is
equal to the
codimension of the
congruence class of
$(A,B)_{\text{\rm
can}}$. Recall that the codimension of congruence orbit of $(A,B)$ is defined as the dimension of the normal space $N_{(A,B)}$ at the point $(A,B)$ and $N_{(A,B)}$ is the orthogonal complement to the tangent space $T_{(A,B)}$ in ${\mathbb C}^{\,\hat{n} \times\hat{n} }_{s}\times{\mathbb C}^{\,\hat{n} \times \hat{n}}_{s}$. 

Following \cite{Dmyt16} and using the norm of the original perturbations, we bound the distance from the miniversal deformations of a symmetric matrix pair to this matrix pair. Notably, this distance can be made arbitrarily small by decreasing the size of the allowed perturbations.  

\hide{
Recall that for matrices $Y$ and
$Z$ and $\nu, \omega \in\mathbb
C$ the following inequalities hold (e.g., see \cite[Section
5.6]{HoJo85})
\begin{equation}\label{lk}
\|\nu Y+ \omega Z\|\le
|\nu |\,\|Y\|+|  \omega |\,\|Z\| \quad \text{and} \quad
\|YZ\|\le \|Y\|\,\|Z\|.
\end{equation}
Let $(A,B)\in({\mathbb
C}^{\,\hat{n}\times \hat{n}}_c,{\mathbb
C}^{\,\hat{n}\times \hat{n}}_c)$ and $\alpha:=\|A\| , \beta:=\|B\|$. 
}
By
\eqref{jyr}, for each pair of symmetric ${\hat{n}}$-by-${\hat{n}}$ matrices
$(E_{ij},0)$ and $(0,E_{i'j'})$, $1 \le i,j,i',j' \le {\hat{n}}$ there exist $X_{ij},X'_{i'j'}\in\mathbb
C^{\hat{n}\times \hat{n}}$ such
that
\begin{equation*}
\begin{aligned}\label{8}
(E_{ij},0)&+X_{ij}^T(A,B)
+(A,B)X_{ij} \in {\cal
D}({\mathbb C}),\\
(0,E_{i'j'})&+X_{i'j'}^{'T}(A,B)
+(A,B)X'_{i'j'} \in {\cal
D}({\mathbb C}),
\end{aligned}
\end{equation*}
where ${\cal D}({\mathbb C})$ is
defined in \eqref{spacedc}.
If $(i,j)\in \Ind_1({\cal D})$, then
$(E_{ij},0) \in {\cal
D}({\mathbb C})$, and
so we can put
$X_{ij}=0$. Analogously, if $(i',j')\in \Ind_2({\cal D})$, then
$(0,E_{i'j'}) \in {\cal
D}({\mathbb C})$, and
so we can put
$X_{i'j'}=0$.
Denote
\begin{equation}\label{gamma}
\gamma:=
\sum_{(i,j)\notin \Ind_1({\cal D})}
\|X_{ij}\|+ \sum_{(i',j')\notin{ \Ind_2({\cal D})}} \|X^{'}_{i'j'}\|,
\end{equation}
where $ \| \cdot \|$ denotes the Frobenius norm. 

\begin{theorem}[Upper bound for the norm of miniversal deformations]
\label{defteo}
Let $(A,B)\in({\mathbb
C}^{\,\hat{n}\times \hat{n}}_s,{\mathbb
C}^{\,\hat{n}\times \hat{n}}_s)$ and
let $\varepsilon \in \mathbb{R}$ such that $0< \varepsilon < \kappa$
{where $\kappa = (\max \{ 1+\gamma(\alpha+1)(2+ \gamma),1+\gamma(\beta+1)(2+ \gamma)\})^{-1}$ with $\alpha:=\|A\| , \beta:=\|B\|$  and $\gamma$ is defined in \eqref{gamma}}.
For each pair of symmetric ${\hat{n}}$-by-${\hat{n}}$ matrices $(M,N)$ satisfying
\begin{equation} \label{15}
\|M\|<\varepsilon^{2},\qquad
\|N\|<\varepsilon^{2},
\end{equation}
there exists a matrix $S=I_{\hat{n}}+X$
depending holomorphically on the
entries of $(M,N)$ in a neighborhood of
zero such that
\[S^{T}(A+M,B+N)S=(A+P,B+Q), \ \ (P,Q) \in {\cal D}({\mathbb C}), \ \|  P \|<\varepsilon, \text{and} \ \| Q \|<\varepsilon, \] 
where ${\mathbb C}^{\,\hat{n}\times \hat{n}}_{s}\times{\mathbb C}^{\,\hat{n}\times \hat{n}}_{s}=T_{(A,B)_{\text{\rm can}}} + {\cal D}({\mathbb C})$.
\end{theorem}
\noindent The proof of Theorem \ref{defteo} is analogous to the proof of Theorem~2.2 in \cite{Dmyt16} and  we omit it. 
\hide{
\begin{proof}
First, note that if $M=0$ and $N=0$ then $S=I_{\hat{n}}$.

We construct
$S=I_{\hat{n}}+X$.  If
$M=\sum_{i,j}
m_{ij}E_{ij}$ and $N=\sum_{i,j}
n_{ij}E_{ij}$ (i.e.,
 $M=[m_{ij}]$ and $N=[n_{ij}]$),
then we can chose $X_{ij}$ and $X'_{ij}$ \eqref{8}, such that 
\begin{multline*}
\sum_{i,j}
(m_{ij}E_{ij},n_{ij}E_{ij})+\sum_{i,j}
(m_{ij}X_{ij}^{T}+n_{ij}X_{ij}^{'T})(A+M,B+N) \\ +
(A+M,B+N)\sum_{i,j}(m_{ij}X_{ij}+n_{ij}X'_{ij}) \in
{\cal D}({\mathbb C})
\end{multline*}
and for
\[
X:=\sum_{i,j}
(m_{ij}X_{ij}+n_{ij}X'_{ij})
\]
we have
\begin{equation*}\label{18}
(M,N)+X^T(A+M,B+N)+(A+M,B+N)X\in
{\cal D}({\mathbb C}).
\end{equation*}

If
$(i,j)\notin \Ind_1({\cal D})$ (or, respectively, $(i,j)\notin \Ind_2({\cal D})$), then
$|m_{ij}|<\varepsilon^{2}$ (or, respectively, $|n_{ij}|<\varepsilon^{2}$)
by \eqref{15}.
We obtain
\begin{align*}
\|X\|&\le
\sum_{(i,j)\notin \Ind_1({\cal D})}
|m_{ij}|\|X_{ij}\|+ \sum_{(i,j)\notin{ \Ind_2({\cal D})}} |n_{ij}|\|X^{'}_{ij}\| \\
&<
\sum_{(i,j)\notin \Ind_1({\cal D})}
\varepsilon^{2}\|X_{ij}\|+ \sum_{(i,j)\notin{ \Ind_2({\cal D})}} \varepsilon^{2} \|X^{'}_{ij}\|=
\varepsilon^{2} \gamma.
\end{align*}
Put
\[
S^{T}(A+M,B+N)S=(A+P,B+Q)\quad \text{where }
S:=I_{\hat{n}}+X,
\]
then
\begin{multline*}\label{18de}
(P,Q)=(M,N)+X^T(A+M,B+N)+(A+M,B+N)X \\
+X^T(A+M,B+N)X.
\end{multline*}
Summing up, we obtain 
\begin{align*}
\|P\|
&\le\|M\|+2\|X\|(\|A\|+\|M\|)
+\|X\|^2(\|A\|+\|M\|)
 \\&<
\varepsilon^{2}+2\varepsilon^{2}
\gamma(\alpha+\varepsilon^{2})+
\varepsilon^{4}\gamma^2(\alpha+\varepsilon^{2})
=\varepsilon^{2}+\varepsilon^{2}
\gamma(\alpha+\varepsilon^{2})(2+
\varepsilon^{2} \gamma)
 \\&<
\varepsilon^{2}(1+\gamma(\alpha+1)(2+ \gamma)) < \varepsilon,\\
\|Q\|
&\le\|N\|+2\|X\|(\|B\|+\|N\|)
+\|X\|^2(\|B\|+\|N\|)
 \\&<
\varepsilon^{2}(1+\gamma(\beta+1)(2+ \gamma)) < \varepsilon.
\end{align*}
\end{proof}
}

In the following corollary we compute explicitly the codimensions of orbits of pairs of
symmetric complex matrices. Alternatively, these codimensions can be computed using Corollary~2.2 and Theorem~2.3 of \cite{DmKS14}. 
\begin{corollary}\label{soldim}
The codimension of congruence orbit of a pair of
symmetric complex matrices in the congruence canonical form $(A,B)_{\rm can}=
\bigoplus_{i=1}^{a} \mathcal  H_{h_i}(\lambda_i)
 \oplus
\bigoplus_{j=1}^{b} \mathcal K_{k_j}
 \oplus
\bigoplus_{r=1}^{c} \mathcal L_{l_r}$ 
can be computed as follows:
\begin{equation}\label{dsj}
\codim {(A,B)_{\rm can}} = c_{\mathcal  H} + c_{\mathcal  K} +
c_{\mathcal  L} + c_{\mathcal  H\mathcal  H} + c_{\mathcal  K\mathcal  K} + c_{\mathcal  L\mathcal  L} + c_{\mathcal  H\mathcal  K}
+ c_{\mathcal  H\mathcal  L} + c_{\mathcal  K\mathcal  L},
\end{equation}
where the summands correspond to
\begin{itemize}
\item the direct summands of
    \eqref{kus} of the same type:
$$ c_{\mathcal  H}:= \sum_{\begin{smallmatrix} i =1 \end{smallmatrix}}^{a} h_i,\qquad
c_{\mathcal  K}:= \sum_{\begin{smallmatrix} j =1 \end{smallmatrix}}^{b} k_j,
\qquad
c_{\mathcal  L}:= 2\sum_{\begin{smallmatrix} r =1 \end{smallmatrix}}^{c} (l_r+1); $$

\item the pairs of direct summands
    of \eqref{kus} of the same
    type:
$$ c_{\mathcal  H\mathcal  H}:= \sum_{\begin{smallmatrix} i < i' \\
\lambda_i=\lambda_{i'} \end{smallmatrix}}
\min (h_i, h_{i'}), \qquad c_{\mathcal  K\mathcal  K}:= \sum_{j < j'} \min (k_j,k_{j'}),$$
$$ c_{\mathcal  L\mathcal  L}:= \sum_{r < r'} \left( 2 \max (l_r,l_{r'}) + \varepsilon_{rr'} \right),
\quad \text{ in which } \varepsilon_{rr'} := \begin{cases} 2 & \text{if } l_r=l_{r'}, \\
1 & \text{if } l_r \neq l_{r'}; \end{cases} $$
\item the pairs of direct summands
    of \eqref{kus} of different
    types:
$$ c_{\mathcal  H\mathcal  K}:= 0, \qquad c_{\mathcal  H\mathcal  L}:= c\sum_{i} h_i,
\qquad c_{\mathcal  K\mathcal  L}:= c\sum_{j} k_j. $$
\end{itemize}
\end{corollary}
\begin{proof}
The numbers $c_{\mathcal  H}, c_{\mathcal  K}, \dots , c_{\mathcal  K\mathcal  L}$ are obtained by counting the independent parameters in the miniversal deformations from Theorem \ref{teo2}.  
\end{proof}

\section{Proof of Theorem \ref{teo2}}
\label{sec3}

\subsection{Construction of miniversal deformations}

To make this paper self-contained, we briefly describe how 
to construct the simplest miniversal
deformations. 
This method is presented with more details in \cite{Dmyt16,DmFS12,DmFS14} and will be used to prove Theorem \ref{teo2}.


For a matrix pair $(A,B)$, the deformation 
\begin{equation}\label{edr}
{\cal U}(\vec
{\varepsilon}):=
\left(A+\sum_{i=1}^{\hat{n}} \sum_{j=i}^{\hat{n}}
\varepsilon_{ij}E_{ij},\
B+\sum_{i=1}^{\hat{n}} \sum_{j=i}^{\hat{n}}
{\varepsilon}^{'}_{ij}E_{ij}\right) 
\end{equation} 
is universal in the following sense: every deformation
${\cal
B}(\mu_1,\dots,\mu_l)$
of $(A,B)$ has the form 
${\cal
U}(\vec{\varphi}
(\mu_1,\dots,\mu_l)),$
where
$\varphi_{ij}(\mu_1,\dots,\mu_l)$
are convergent in a
neighborhood of $\vec
0$ power series such
that
$\varphi_{ij}(\vec 0)=
0$. Hence ``every
deformation ${\cal
B}(\mu_1,\dots,\mu_l)$''
in Definition \ref{d}
can be replaced by
${\cal U}(\vec{\varepsilon})$. 

\hide{
\begin{lemma}\label{lem}
The following two
conditions are
equivalent for any
deformation ${\cal A}(\lambda_1,\dots,\lambda_k)$
of pair of matrices $(A,B)$:
\begin{itemize}
  \item[\rm(i)]
The deformation ${\cal
A}(\lambda_1,\dots,\lambda_k)$
is versal.
  \item[\rm(ii)]
The deformation
\eqref{edr} is
equivalent to ${\cal
A}(\varphi_1(\vec{\varepsilon}),\dots,
\varphi_k(\vec{\varepsilon}))$
in which all
$\varphi_i(\vec{\varepsilon})$
are convergent in a
neighborhood of\/
$\vec 0$ power series
such that
$\varphi_i(\vec 0)=0$.
\end{itemize}
\end{lemma}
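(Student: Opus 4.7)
The plan is to prove the two implications separately, leaning on the universality property of $\mathcal{U}(\vec{\varepsilon})$ asserted in the paragraph immediately preceding the lemma: every deformation $\mathcal{B}(\vec{\mu})$ of $(A,B)$ can be written as $\mathcal{U}(\vec{\varphi}(\vec{\mu}))$, where the $\varphi_{ij}$ are convergent power series in a neighborhood of $\vec 0$ with $\varphi_{ij}(\vec 0)=0$. This factorization is essentially tautological: the entries of $\mathcal{B}(\vec{\mu})-(A,B)$ are themselves such power series, and one simply defines $\varphi_{ij}$ to equal them.

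For (i) $\Rightarrow$ (ii), I would assume $\mathcal{A}$ is versal and apply Definition \ref{d} with the specific deformation $\mathcal{B}:=\mathcal{U}$. Versality then directly furnishes power series $\varphi_1,\dots,\varphi_k$ vanishing at $\vec 0$ and a deformation $\mathcal{I}$ of $I_n$ witnessing the equivalence of $\mathcal{U}(\vec{\varepsilon})$ with $\mathcal{A}(\varphi_1(\vec{\varepsilon}),\dots,\varphi_k(\vec{\varepsilon}))$. This is exactly the content of (ii).

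For (ii) $\Rightarrow$ (i), I would fix witnesses $\vec{\psi}$ and $\mathcal{I}_0$ for (ii), and let $\mathcal{B}(\vec{\mu})$ be an arbitrary deformation of $(A,B)$. First use universality of $\mathcal{U}$ to write $\mathcal{B}(\vec{\mu})=\mathcal{U}(\vec{\varphi}(\vec{\mu}))$. Then substitute $\vec{\varepsilon}\mapsto\vec{\varphi}(\vec{\mu})$ throughout the equivalence from (ii): the congruence relation
\[
\mathcal{U}(\vec{\varepsilon}) = \mathcal{I}_0(\vec{\varepsilon})^T\,\mathcal{A}(\vec{\psi}(\vec{\varepsilon}))\,\mathcal{I}_0(\vec{\varepsilon})
\]
becomes, after substitution,
\[
\mathcal{B}(\vec{\mu}) = \mathcal{I}_0(\vec{\varphi}(\vec{\mu}))^T\,\mathcal{A}(\vec{\psi}(\vec{\varphi}(\vec{\mu})))\,\mathcal{I}_0(\vec{\varphi}(\vec{\mu})).
\]
Since all $\varphi_{ij}$ and $\psi_i$ vanish at $\vec 0$, the compositions $\psi_i\circ\vec{\varphi}$ are convergent power series in some neighborhood of $\vec 0$ and vanish there, while $\mathcal{I}_0\circ\vec{\varphi}$ is a legitimate deformation of $I_n$ (using $\mathcal{I}_0(\vec 0)=I_n$). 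Hence $\mathcal{A}$ satisfies Definition \ref{d}, proving versality.

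The main work here is purely bookkeeping with germs of holomorphic maps: one must verify that the substituted congruence deformation remains holomorphic and that the various radii of convergence line up. This is straightforward because all inner series vanish at the origin, so any composition is defined on a sufficiently small neighborhood of $\vec 0$. I do not anticipate a genuine obstacle; the lemma is really a convenient repackaging of the universality of $\mathcal{U}(\vec{\varepsilon})$, allowing the remainder of Section \ref{sur} to verify versality by comparing $\mathcal{U}$ to the candidate deformation $(A,B)_{\text{\rm can}}+\mathcal{D}(\vec{\varepsilon})$ instead of testing against every possible $\mathcal{B}$.
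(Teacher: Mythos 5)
Your proof is correct and takes essentially the same approach as the paper: the paper condenses the whole argument into the one-sentence observation that, by universality of $\mathcal{U}$, the arbitrary deformation $\mathcal{B}$ in Definition \ref{d} may be replaced by $\mathcal{U}(\vec{\varepsilon})$. You have simply unpacked that remark into its two implications, with the (ii)$\Rightarrow$(i) direction made explicit by composing the witnesses with $\vec{\varphi}$; this is the intended argument, just written out in full.
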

}

The following lemma ensures
that any matrix pair
with entries $0$ and
$*$ that satisfies
\eqref{jyr} can be
taken as a versal deformation of $(A,B)$. 
For the proof of Lemma
\ref{t2.1} see \cite[Lemma~3.2]{Dmyt16}.
Recall that, for a subspace $U$ of a vector space $V$, a \emph{coset of \/$U$ in $V$} is a set $v+U$, where $v\in V$.

\begin{lemma}
 \label{t2.1}
Let $(A,B)\in ({\mathbb
C}^{\,\hat{n}\times \hat{n}}_s, {\mathbb
C}^{\,\hat{n}\times \hat{n}}_s)$ and
let $\cal D$ be a pair of
$(0,*)$ matrices of size
$\hat{n}\times \hat{n}$. The
following are
equivalent:
\begin{itemize}
  \item[\rm(i)]
The deformation
$(A,B)+{\cal D}(\vec{\varepsilon})$
defined in \eqref{a2z}
is miniversal.

  \item[\rm(ii)]
The vector space
$({\mathbb
C}^{\,\hat{n}\times \hat{n}}_s, {\mathbb
C}^{\,\hat{n}\times \hat{n}}_s)$
decomposes into the
direct sum
\begin{equation}\label{a4}
({\mathbb C}^{\,\hat{n}\times
\hat{n}}_s, {\mathbb
C}^{\,\hat{n}\times \hat{n}}_s)=T_{(A,B)}
+ {\cal
D}({\mathbb C}), \quad T_{(A,B)} \cap {\cal
D}({\mathbb C}) = \{ (A,B) \}.
\end{equation}

  \item[\rm(iii)]
Each coset of
$T_{(A,B)}$
in $({\mathbb
C}^{\,\hat{n}\times \hat{n}}_s, {\mathbb
C}^{\,\hat{n}\times \hat{n}}_s)$
contains exactly one
matrix of the form
${\cal D}$.
\end{itemize}
\end{lemma}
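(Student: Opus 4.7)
The plan is to prove the three conditions are equivalent by first establishing (ii) $\Leftrightarrow$ (iii), then (ii) $\Leftrightarrow$ (i). The first of these equivalences is pure linear algebra: for any subspace $U$ of a vector space $V$ and any subset $W \subseteq V$, one has $V = U \oplus W$ as vector spaces if and only if every coset $v + U$ meets $W$ in exactly one point. Applied with $V = ({\mathbb C}^{n\times n}_s)^2$, $U = T(A,B)$, and $W = {\cal D}(\mathbb C)$, this yields (ii) $\Leftrightarrow$ (iii).

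For (ii) $\Rightarrow$ (i), I plan to invoke Lemma \ref{lem}: it suffices to produce a holomorphic ${\cal I}(\vec\varepsilon)$ with ${\cal I}(\vec 0) = I$ and a holomorphic $\vec\varphi(\vec\varepsilon)$ with $\vec\varphi(\vec 0) = \vec 0$ such that
$$
{\cal U}(\vec\varepsilon) = {\cal I}(\vec\varepsilon)^{T}\bigl((A,B) + {\cal D}(\vec\varphi(\vec\varepsilon))\bigr){\cal I}(\vec\varepsilon).
$$
Introduce the holomorphic map $\Phi(C,\vec\phi) := (I+C)^{T}\bigl((A,B) + {\cal D}(\vec\phi)\bigr)(I+C)$. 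A direct expansion shows that its differential at $(0,\vec 0)$ sends $(C,\vec\phi)$ to $C^{T}(A,B) + (A,B)C + {\cal D}(\vec\phi)$, whose image equals $T(A,B) + {\cal D}(\mathbb C) = ({\mathbb C}^{n\times n}_s)^{2}$ by (ii). Restricting $C$ to a linear complement of the stabilizer algebra $\{C : C^{T}(A,B) + (A,B)C = 0\}$ and using the directness $T(A,B) \cap {\cal D}(\mathbb C) = 0$ in (ii) turns the differential into a linear isomorphism, so the holomorphic implicit function theorem yields $(C(\vec\varepsilon),\vec\varphi(\vec\varepsilon))$ solving the equation $\Phi(C,\vec\phi) = {\cal U}(\vec\varepsilon)$ for $\vec\varepsilon$ near $\vec 0$; setting ${\cal I}(\vec\varepsilon) := I + C(\vec\varepsilon)$ finishes the argument.

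For (i) $\Rightarrow$ (ii), if $(A,B) + {\cal D}(\vec\varepsilon)$ is miniversal, Lemma \ref{lem} produces ${\cal I}(\vec\varepsilon)$ and $\vec\varphi(\vec\varepsilon)$ satisfying the same identity. Differentiating this identity with respect to each $\varepsilon_{ij}$ at $\vec\varepsilon = \vec 0$ realises each basis element of $({\mathbb C}^{n\times n}_s)^{2}$ as an element of $T(A,B) + {\cal D}(\mathbb C)$, so the sum spans. Directness then follows from minimality: any versal deformation requires at least $\dim ({\mathbb C}^{n\times n}_s)^{2} - \dim T(A,B)$ parameters, so miniversality forces $\dim {\cal D}(\mathbb C)$ to equal this codimension, and combined with the spanning property this gives $T(A,B) \cap {\cal D}(\mathbb C) = 0$. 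I expect the main obstacle to be the implicit function step in (ii) $\Rightarrow$ (i), where the differential of $\Phi$ is surjective but not injective because of the nontrivial congruence stabilizer of $(A,B)$; one must restrict $C$ to a complementary slice before extracting the holomorphic solution.
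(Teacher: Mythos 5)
Your argument is correct, but it takes a genuinely different route from the paper. The paper identifies $T(A,B)$ as the tangent space to the congruence orbit of $(A,B)$ at $(A,B)$, reformulates (ii) as the statement that ${\cal D}(\mathbb C)$ is a transversal (of minimal dimension) to that orbit, and then cites Arnold's theorem that a minimal-dimensional transversal gives a miniversal deformation; (ii)$\Leftrightarrow$(iii) is dismissed as obvious. You instead re-derive the versality from scratch: for (ii)$\Rightarrow$(i) you set up the map $\Phi(C,\vec\phi)=(I+C)^T\bigl((A,B)+{\cal D}(\vec\phi)\bigr)(I+C)$, observe its differential at the origin is $C^T(A,B)+(A,B)C+{\cal D}(\vec\phi)$, correctly identify that one must restrict $C$ to a complement of the congruence stabilizer to make the differential an isomorphism (using the directness in (ii) for injectivity, the sum for surjectivity), and then invoke the holomorphic inverse function theorem to solve $\Phi(C,\vec\phi)={\cal U}(\vec\varepsilon)$, after which Lemma \ref{lem} gives versality. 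For (i)$\Rightarrow$(ii) you differentiate the versality identity at $\vec\varepsilon=\vec 0$ to get the spanning $T(A,B)+{\cal D}(\mathbb C)=({\mathbb C}^{n\times n}_s)^2$, and deduce directness from a dimension count. In effect you are reproving the relevant fragment of Arnold's transversality theorem in the holomorphic setting; this buys self-containment at the cost of length, whereas the paper buys brevity by citing \cite{arn2}.

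Two small points worth tightening. First, in (ii)$\Rightarrow$(i) the IFT construction establishes \emph{versality}, but the lemma's clause (i) asserts \emph{miniversality}; you should add the one-line observation that by (ii) the number of parameters of ${\cal D}$ equals $\operatorname{codim}T(A,B)$, which is a lower bound on the parameter count of any versal deformation, so the deformation is in fact miniversal. Second, the claim ``any versal deformation requires at least $\operatorname{codim}T(A,B)$ parameters,'' which you use in (i)$\Rightarrow$(ii) and would also need in (ii)$\Rightarrow$(i), deserves its own sentence: it follows by the same differentiation trick applied to an arbitrary versal deformation ${\cal A}(\lambda_1,\dots,\lambda_k)$, which shows $T(A,B)$ together with the $k$ partial derivatives $\partial{\cal A}/\partial\lambda_i|_{\vec 0}$ must span $({\mathbb C}^{n\times n}_s)^2$, forcing $k\ge\operatorname{codim}T(A,B)$. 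With these additions your argument is complete and does not rely on Arnold's reference.
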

\hide{
\begin{proof}
Define the action of
the group
$GL_n(\mathbb C)$ of
nonsingular $n$-by-$n$
matrices on the space
$[{\mathbb
C}^{\,n\times n}_s,{\mathbb
C}^{\,n\times n}_s]$ by
\[
(A,B)^S=S^T (A,B)S,\qquad
(A,B)\in [{\mathbb
C}^{\,n\times n}_s,{\mathbb
C}^{\,n\times n}_s],\quad
S\in GL_n(\mathbb C).
\]
The orbit $(A,B)^{GL_n}$
of $(A,B)$ under this
action consists of all pairs
of symmetric
matrices that are
congruent to the pair $(A,B)$.

The space $T_{(A,B)}$
is the tangent space
to the orbit
$(A,B)^{GL_n}$ at the
point $(A,B)$ since
\begin{multline*}
(A,B)^{I+\varepsilon C}=
(I+\varepsilon
C)^T(A,B)(I+\varepsilon C) \\
=(A,B)+ \varepsilon(C^T (A,B)+
(A,B)C) +\varepsilon^2C^T
(A,B)C
\end{multline*}
for all $n$-by-$n$
matrices $C$ and
$\varepsilon\in\mathbb
C$. Hence ${\cal
D}(\vec
{\varepsilon})$ is
transversal to the
orbit $(A,B)^{GL_n}$ at
the point $(A,B)$ if
\[
({\mathbb C}^{\,n\times
n}_s,{\mathbb
C}^{\,n\times n}_s)=T_{(A,B)} +
{\cal D}({\mathbb C})
\]
(see definitions in
\cite[\S\,29E]{Arno97};
two subspaces of a
vector space are
called
\emph{transversal} if
their sum is equal to
the whole space).

This proves the
equivalence of (i) and
(ii) since a
transversal (of the
minimal dimension) to
the orbit is a
(mini)versal
deformation
\cite[Section~1.6]{Arno72}. The
equivalence of (ii)
and (iii) is obvious.
\end{proof}
}

Recall that versality
of each deformation
$(A,B)+{\cal D}(\vec
{\varepsilon})$ in
which ${\cal D}$
satisfies \eqref{a4} means
that there exists a
deformation ${\cal
I}(\vec
{\varepsilon})$ of the
identity matrix such
that ${\cal
D}(\vec{\varepsilon})=
{\cal
I}(\vec{\varepsilon})^{T}
{\cal
U}(\vec{\varepsilon})
{\cal
I}(\vec{\varepsilon})$,
where ${\cal U}(\vec
{\varepsilon})$ is
defined in
\eqref{edr}.

\hide{
Thus, a simplest
miniversal deformation
of $(A,B)\in ({\mathbb
C}^{\,n\times n}_s, {\mathbb
C}^{\,n\times n}_s)$ can
be constructed as
follows. Let
$(T_1,\dots,T_r)$ be a
basis of the space
$T_{(A,B)}$,
and let
$(E_1,\dots,E_{\frac{n(n+1)}{2}};F_1,\dots,F_{\frac{n(n+1)}{2}})$ be
the basis of $({\mathbb
C}^{\,n\times n}_s, {\mathbb
C}^{\,n\times n}_s)$
consisting of all
elementary matrices
$(E_{ij},F_{ij})$. Removing
from the sequence
$(T_1,\dots,T_r,E_1,\dots,E_{\frac{n(n+1)}{2}},F_1,\dots,F_{\frac{n(n+1)}{2}})$
every pair of matrices that is a
linear combination of
the preceding
matrices, we obtain a
new basis \newline $(T_1,\dots,
T_r,
E_{i_1},\dots,E_{i_k},
F_{i_1},\dots,F_{i_m})$
of the space $({\mathbb
C}^{\,n\times n}_s, {\mathbb
C}^{\,n\times n}_s)$. By
Lemma~\ref{t2.1}, the
deformation
\[
{\cal
A}(\varepsilon_1,\dots,
\varepsilon_k,f_1, \dots, f_m)=
(A+\varepsilon_1
E_{i_1}+\dots+\varepsilon_kE_{i_k},B+f_1F_{i_1}+ \dots + f_mF_{i_m})
\]
is miniversal.
}
A simplest
miniversal deformation
of $(A,B)\in ({\mathbb
C}^{\,\hat{n}\times \hat{n}}_s, {\mathbb
C}^{\,\hat{n}\times \hat{n}}_s)$ can
be constructed as
follows. Let $(E_1,\dots,E_{{\hat{n}}({\hat{n}}+1)})$ be
the basis of $({\mathbb
C}^{\,\hat{n}\times \hat{n}}_s, {\mathbb
C}^{\,\hat{n}\times \hat{n}}_s)$
in which every $E_k$ is either of the form 
$(E_{ij},0)$ or $(0,E_{i'j'})$ and let $(T_1,\dots,T_r)$ be a
basis of the space
$T_{(A,B)}$. By removing
from the sequence
$(T_1,\dots, T_r,E_1,\dots,E_{{\hat{n}}({\hat{n}}+1)})$
every pair of matrices that is a
linear combination of
the preceding
matrices, we receive a
new basis $(T_1,\dots, T_r,
E_{i_1},\dots,E_{i_k})$
of the space $({\mathbb
C}^{\,\hat{n}\times \hat{n}}_s, {\mathbb
C}^{\,\hat{n}\times \hat{n}}_s)$. By
Lemma \ref{t2.1}, the
deformation
\begin{align*}
{\cal
A}(\varepsilon_1,\dots,
\varepsilon_{k_1},\varepsilon'_1, \dots, \varepsilon'_{k_2})&=
(A,B)+\varepsilon_1E_{1}+\dots+\varepsilon_{k_1}E_{i_{k_1}} +\varepsilon'_1E_{i_{k_1+1}}+\dots+\varepsilon'_{k_2}E_{i_{k}}\\
&=
(A,B)+\varepsilon_1(E_{i_1,j_1},0)+\dots+\varepsilon_{k_1}(E_{i_{k_1}j_{k_1}},0) \\ 
&+\varepsilon'_1(0,E_{i_{k_1+1},j_{k_1+1}})+\dots+\varepsilon'_{k_2}(0,E_{i_{k},j_{k}}),
\end{align*}
where $k_1+k_2=k$, is miniversal.

For each pair of $m\times m$
symmetric matrices $(A_1,B_1)$ and each pair
$n\times n$ symmetric matrices
$(A_2,B_2)$, define the vector
spaces
\begin{equation}\label{neh}
 V(A_1,B_1):=\{ S^T(A_1,B_1)+(A_1,B_1)S, \text{ where } S\in
{\mathbb C}^{m\times m} \}.
\end{equation}
\begin{equation}  \label{neh1}
\begin{aligned}
 V((A_1,B_1),(A_2,B_2))&:=\{(
R^T(A_2,B_2)+(A_1,B_1)S,
S^T(A_1,B_1)+(A_2,B_2)R), \\
&\text{ where } \ S\in {\mathbb C}^{m\times n} \ \text{ and } \  R\in
 {\mathbb C}^{n\times m} \}.
 \end{aligned}
\end{equation}

\begin{lemma}\label{thekd}
Let
$(A,B)=(A_1,B_1)\oplus\dots\oplus
(A_t, B_t)$ be a
block-diagonal matrix
in which every $(A_i,B_i)$
is $n_i\times n_i$.
Let $\cal D$ be a pair of
$(0,*)$ matrices of the same size as $(A,B)$ and 
partitioned into
blocks $(D_{ij},D'_{ij})$
conformably to the
partition of $(A,B)$, see \eqref{grsd}. Then
$(A,B)+{\cal D}(E,E')$ is a
simplest miniversal
deformation of $(A,B)$ for
congruence if and only if
\begin{itemize}
  \item[\rm(i)]
every coset of\/
$V(A_i,B_i)$ in $({\mathbb
C}^{n_i\times n_i}_s, {\mathbb
C}^{n_i\times n_i}_s)$
contains exactly one
matrix of the form
$(D_{ii},D'_{ii})$, and

  \item[\rm(ii)]
every coset of
$V((A_i,B_i),(A_i^T,B_i^T))$ in
$({\mathbb
C}^{n_i\times
n_j}, {\mathbb
C}^{n_i\times
n_j}) \oplus  ({\mathbb
C}^{n_j\times n_i}, {\mathbb
C}^{n_j\times n_i})$
contains exactly two pairs of
matrices $((W_1,W_2),(W_1^T,W_2^T))$, where $(W_1,W_2)$ is of the
form $(D_{ij},D'_{ij})$ and $(W_1^T,W_2^T)$ is of the form
$(D_{ji},D'_{ji})=(D_{ij}^T,D_{ij}^{'T})$.
\end{itemize}
\end{lemma}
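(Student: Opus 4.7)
The plan is to invoke Lemma \ref{t2.1}, which asserts that $(A,B)+{\cal D}(\vec{\varepsilon})$ is miniversal precisely when each coset of the tangent space $T(A,B)$ in $({\mathbb C}^{n\times n}_s, {\mathbb C}^{n\times n}_s)$ contains exactly one representative of the form ${\cal D}$. The task then reduces to showing that this coset condition decouples block-by-block according to the partition $(A,B)=(A_1,B_1)\oplus\dots\oplus(A_t,B_t)$.

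For any $n\times n$ matrix $C$ partitioned into blocks $C_{\alpha\beta}$ of size $n_\alpha\times n_\beta$, the block-diagonality of $A=\diag(A_1,\dots,A_t)$ gives the straightforward identity
\[
(C^T A+AC)_{\alpha\beta}=C_{\beta\alpha}^T A_\beta+A_\alpha C_{\alpha\beta},
\]
and the analogous identity for $B$. Therefore the $(i,i)$ block of a generic element of $T(A,B)$ depends only on $C_{ii}$ and sweeps out exactly $V(A_i,B_i)$, while the pair of off-diagonal blocks at positions $(i,j)$ and $(j,i)$ depends only on $C_{ij}$ and $C_{ji}$ and sweeps out exactly $V((A_i,B_i),(A_j,B_j))$.

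Since $A$ and $B$ are symmetric, every element of $T(A,B)$ is symmetric, so its $(j,i)$ block is forced to be the transpose of its $(i,j)$ block; the same symmetry is built into the $(0,*)$-pair $\cal D$ by Remark \ref{indep}. Consequently both $T(A,B)$ and ${\cal D}(\mathbb C)$ split as direct sums compatibly with the decomposition
\[
({\mathbb C}^{n\times n}_s,{\mathbb C}^{n\times n}_s)=\bigoplus_i ({\mathbb C}^{n_i\times n_i}_s,{\mathbb C}^{n_i\times n_i}_s)\oplus\bigoplus_{i<j}\left(({\mathbb C}^{n_i\times n_j},{\mathbb C}^{n_i\times n_j})\oplus({\mathbb C}^{n_j\times n_i},{\mathbb C}^{n_j\times n_i})\right).
\]
The coset condition of Lemma \ref{t2.1} therefore decouples into independent requirements on each diagonal block and on each unordered pair of off-diagonal blocks, which are precisely (i) and (ii) in the statement.

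The only point demanding genuine care, and the main obstacle, is the symmetry bookkeeping in (ii): the relation ${\cal D}_{ji}={\cal D}_{ij}^T$ ties the two off-diagonal slots together, so that fixing the representative $(Q_1,Q_2)$ of type $({\cal D}_{ij},{\cal D}'_{ij})$ automatically forces $(W_1,W_2)=(Q_1^T,Q_2^T)$ of type $({\cal D}_{ji},{\cal D}'_{ji})$. Once this is unwound, the equivalence of the global condition in Lemma \ref{t2.1} with the block-wise conditions (i) and (ii) is immediate.
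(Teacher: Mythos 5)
Your proposal is correct and follows essentially the same route as the paper: invoke Lemma~\ref{t2.1}(iii), then observe that block-diagonality of $(A,B)$ makes $S^T(A,B)+(A,B)S$ decouple into the diagonal contribution $S_{ii}^T(A_i,B_i)+(A_i,B_i)S_{ii}$ and the paired off-diagonal contributions $S_{ji}^T(A_j,B_j)+(A_i,B_i)S_{ij}$ (with transpose at $(j,i)$), so the ``exactly one representative'' condition splits into the blockwise conditions (i) and (ii). The paper carries out the same decomposition explicitly in equations \eqref{kid}, \eqref{mht}, \eqref{djh}, \eqref{djhh}; your abstract identity $(C^TA+AC)_{\alpha\beta}=C_{\beta\alpha}^TA_\beta+A_\alpha C_{\alpha\beta}$ is just a compressed form of that computation, and your remark on the symmetry bookkeeping (${\cal D}_{ji}={\cal D}_{ij}^T$) correctly addresses the one subtlety the paper handles implicitly.
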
\noindent See \cite[Lemma~3.3]{Dmyt16} for the proof of Lemma \ref{thekd}.
\hide{
\begin{proof}
By Lemma
\ref{t2.1}(iii),
$(A,B)+{\cal D}(\vec
{\varepsilon})$ is a
simplest miniversal
deformation of $(A,B)$ if
and only if for each
$(C,C')\in({\mathbb
C}^{n\times n}_s,{\mathbb
C}^{\,n\times n}_s)$ the
coset $(C,C')+T_{(A,B)}$
contains exactly one
$(D,D')$ of the form ${\cal
D}$; that is, exactly
one
\begin{equation}\label{kid}
(D,D')=(C,C')+S^T(A,B)+(A,B)S\in{\cal
D}(\mathbb C)\qquad
\text{with
$S\in{\mathbb
C}^{n\times n}$.}
\end{equation}
Partition $(D,D'),\ (C,C')$, and
$S$ into blocks
conformably to the
partition of $(A,B)$. By
\eqref{kid}, for each
$i$ we have
$(D_{ii},D'_{ii})=(C_{ii},C'_{ii})+
S_{ii}^T(A_{i},B_{i})
+(A_{i},B_{i})S_{ii}$, and
for all $i$ and $j$
such that $i<j$ we
have
\begin{multline}\label{mht}
\left(
\begin{bmatrix}
D_{ii}&D_{ij}
 \\ D_{ji}&D_{jj}
\end{bmatrix},
\begin{bmatrix}
D'_{ii}&D'_{ij}
 \\ D'_{ji}&D'_{jj}
\end{bmatrix}
\right)
=
\left(
\begin{bmatrix}
C_{ii}&C_{ij}
 \\ C_{ji}&C_{jj}
\end{bmatrix},
\begin{bmatrix}
C'_{ii}&C'_{ij}
 \\ C'_{ji}&C'_{jj}
\end{bmatrix}
\right) \\
+ \begin{bmatrix}
S_{ii}^T&S_{ji}^T
 \\ S_{ij}^T&S_{jj}^T
\end{bmatrix}
\left(
\begin{bmatrix}
A_i&0
 \\ 0& A_j
\end{bmatrix},
\begin{bmatrix}
B_i&0
 \\ 0& B_j
\end{bmatrix}
\right)
+
\left(
\begin{bmatrix}
A_i&0
 \\ 0& A_j
\end{bmatrix},
\begin{bmatrix}
B_i&0
 \\ 0& B_j
\end{bmatrix}
\right)
\begin{bmatrix}
S_{ii}&S_{ij}
 \\ S_{ji}&S_{jj}
\end{bmatrix}.
\end{multline}
Thus, \eqref{kid} is
equivalent to the
conditions
\begin{align}
\label{djh}
&(D_{ii},D'_{ii})=(C_{ii},C'_{ii})
+  S_{ii}^T(A_i,B_i)+(A_i,B_i)S_{ii}\in{\cal
D}_{ii}(\mathbb
C),  1\le i\le t,\\
\label{djhh}
&((D_{ij},D'_{ij}),(D_{ji},D'_{ji}))=
((C_{ij},C'_{ij}), (C_{ji},C'_{ji})) + \\
&((S_{ji}^TA_j+A_iS_{ij},S_{ji}^TB_j+B_iS_{ij}),
(S_{ij}^TA_i+A_jS_{ji},S_{ij}^TB_i+B_jS_{ji}))\\ 
& \in {\cal D}_{ij}(\mathbb C)\oplus {\cal D}_{ji}(\mathbb C), 1\le i<j\le t. 
\end{align}
Hence for each
$(C,C')\in ({\mathbb
C}^{n\times n}_s,{\mathbb
C}^{n\times n}_s)$ there
exists exactly one
$(D,D')\in{\cal D}$ of the
form \eqref{kid} if
and only if
\begin{itemize}
  \item[(i$'$)]
for each
$(C_{ii},C'_{ii})\in({\mathbb
C}^{n_i\times n_i}_s,{\mathbb
C}^{n_i\times n_i}_s)$
there exists exactly
one $(D_{ii},D'_{ii})\in{\cal
D}_{ii}$ of the form
\eqref{djh}, and
  \item[(ii$'$)]
for each $((C_{ij},C'_{ij}),
(C_{ji},C'_{ji}))\in ({\mathbb
C}^{n_i\times
n_j},{\mathbb
C}^{n_i\times
n_j})\oplus ({\mathbb
C}^{n_j\times n_i},{\mathbb
C}^{n_j\times n_i})$
there exists exactly
one
$((D_{ij},D'_{ij}),(D_{ji},D'_{ji}))\in
{\cal D}_{ij}(\mathbb
C)\oplus {\cal
D}_{ji}(\mathbb C)$ of
the form \eqref{djhh}.
\end{itemize}
This proves the lemma.
\end{proof}
}

\begin{corollary}\label{the}
By Lemma \ref{thekd},
$(A,B)+{\cal D}(\vec
{\varepsilon})$ is a
miniversal deformation
of $(A,B)$ if and only if
each submatrix of the
form
\begin{equation*}\label{a8}
\left(
\begin{bmatrix}
  A_i+D_{ii}(\vec
{\varepsilon}) &
  D_{ij}(\vec
{\varepsilon})\\
  D_{ji}(\vec
{\varepsilon}) &A_j+
 D_{jj}(\vec
{\varepsilon})
\end{bmatrix}, 
\begin{bmatrix}
  B_i+D'_{ii}(\vec
{\varepsilon}) &
  D'_{ij}(\vec
{\varepsilon})\\
  D'_{ji}(\vec
{\varepsilon}) &B_j+
D'_{jj}(\vec
{\varepsilon})
\end{bmatrix}
\right),\qquad i<j
\end{equation*}
is a miniversal
deformation of the pair
$(A_i\oplus A_j,B_i\oplus B_j)$.
\end{corollary}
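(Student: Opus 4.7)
The plan is to reduce the corollary to Lemma~\ref{thekd} by observing that the characterization of miniversality given there is entirely \emph{local}: condition~(i) refers only to the single diagonal block $(A_i,B_i)$, and condition~(ii) refers only to the pair $((A_i,B_i),(A_j,B_j))$ and the off-diagonal blocks ${\cal D}_{ij}$, ${\cal D}_{ji}$. Consequently the conditions one obtains by applying Lemma~\ref{thekd} to the full decomposition $(A,B)=(A_1,B_1)\oplus\dots\oplus(A_t,B_t)$ and to a two-summand subdecomposition $(A_i\oplus A_j,B_i\oplus B_j)$ coincide on the overlapping indices.

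Concretely, I would argue as follows. Apply Lemma~\ref{thekd} to the two-summand pair $(A_i\oplus A_j,B_i\oplus B_j)$: the displayed $2\times 2$ block deformation is miniversal if and only if every coset of $V(A_i,B_i)$ contains exactly one matrix of the form $({\cal D}_{ii},{\cal D}'_{ii})$, every coset of $V(A_j,B_j)$ contains exactly one of the form $({\cal D}_{jj},{\cal D}'_{jj})$, and every coset of $V((A_i,B_i),(A_j^T,B_j^T))$ contains exactly one admissible pair for the off-diagonal blocks indexed by $(i,j)$ and $(j,i)$. These are precisely the instances of conditions~(i) and~(ii) of Lemma~\ref{thekd} applied to $(A,B)$ at the indices $i$, $j$, and $(i,j)$.

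For the forward direction, assume $(A,B)+{\cal D}(\vec\varepsilon)$ is miniversal. By Lemma~\ref{thekd} its diagonal condition holds for every index and its off-diagonal condition holds for every pair $i<j$; in particular the three conditions above are satisfied, so the $2\times 2$ submatrix is miniversal for $(A_i\oplus A_j,B_i\oplus B_j)$. For the converse, assume miniversality of every such $2\times 2$ submatrix. Applied to any pair $(i,j)$ this yields the off-diagonal condition~(ii) of Lemma~\ref{thekd} at $(i,j)$ and also the diagonal condition~(i) at both $i$ and $j$; since every index $i$ is covered by choosing some $j\ne i$ (assuming $t\ge 2$; the case $t=1$ is trivial), all conditions of Lemma~\ref{thekd} hold for $(A,B)+{\cal D}(\vec\varepsilon)$, proving its miniversality.

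There is no real obstacle here beyond bookkeeping: the only point that needs a line of care is ensuring that the block-diagonal structure of $S$ in the transversality computation~\eqref{kid}--\eqref{djhh} truly decouples the $(i,j)$ off-diagonal condition from the rest, which is immediate from~\eqref{mht} because the matrix $S_{ii}$ only affects ${\cal D}_{ii}$ (and similarly $S_{jj}$ only affects ${\cal D}_{jj}$), while the off-diagonal blocks $S_{ij},S_{ji}$ only affect ${\cal D}_{ij}$ and ${\cal D}_{ji}$. Once this is noted, the corollary is a direct restatement of Lemma~\ref{thekd}.
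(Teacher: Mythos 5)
Your proposal is correct and follows the same path the paper implicitly intends: the corollary is stated without proof precisely because it is an immediate restatement of conditions (i) and (ii) of Lemma~\ref{thekd}, which decouple into per-index and per-pair conditions exactly as you describe. Your observation that the $t=1$ case must be treated (or excluded) separately is a small but legitimate point that the paper glosses over; otherwise the argument matches the intended one.
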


To prove Theorem \ref{teo2}, we show that the pairs
\eqref{Hdef}--\eqref{ktlm}
satisfy the conditions
(i) and (ii) of Lemma \ref{thekd}.  
Each ${\cal X}_i$ in \eqref{gto} is of the
form ${\cal H}_n(\lambda)$,
or ${\cal K}_n$, or
${\cal L}_n$, and so there
are 9 types of pairs
${\cal D}({\cal X}_i)$ and
${\cal D}({\cal X}_i,{\cal X}_j)$
with $i<j$; they are given in 
\eqref{Hdef}--\eqref{ktlm}.

\subsection{Diagonal blocks
of matrices of $\cal
D$}

In Sections \ref{dhndkn} and \ref{dln} we verify the
condition (i) of Lemma
\ref{thekd} for the diagonal blocks of $\cal D$ defined in
part (i) of Theorem~\ref{teo2}. 

\subsubsection{Diagonal blocks
${\cal D}({\cal H}_{n}(\lambda))$ and ${\cal D}({\cal K}_{n})$}
\label{dhndkn}

We start by considering the pair of blocks ${\cal H}_{n}(\lambda)$.
Due to Lemma
\ref{thekd}(i), it
suffices to prove that
each pair of symmetric
$n$-by-$n$ matrices
$(A,B)$ can be reduced to
exactly one pair of matrices of
the form \eqref{Hdef} by adding
\begin{align*}
\delta (A,B)= (\delta A, \delta B) &=S^T (\Delta_n , \Lambda_n(\lambda))
+(\Delta_n , \Lambda_n(\lambda))S \\
&=(S^T\Delta_n + \Delta_n S ,S^T \Lambda_n(\lambda) +
\Lambda_n(\lambda)S)
\end{align*}
in which
$S$ is an arbitrary
$n$-by-$n$ matrix. Obviously, that adding $S^T\Delta_n + \Delta_n S$ we reduce $A$ to zero.
To preserve $A$, we must hereafter take $S$ such that $S^T\Delta_n + \Delta_n S=0$.
This means that $S$ is skew symmetric with respect to its anti-diagonal.
Therefore, we reduce $B$ by adding
\begin{multline}
\delta B=S^T \Lambda_n(\lambda) + \Lambda_n(\lambda) S\\
{\footnotesize
=\begin{bmatrix}
s_{11}& s_{12} & \ldots &s_{1,n-2}&s_{1,n-1}& 0\\
s_{21}& s_{22} & \ldots &s_{2,n-2}&0& -s_{1,n-1}\\
s_{31}& s_{32} & \ldots &0&-s_{2,n-2}& -s_{1,n-2}\\
\vdots&\vdots&\ddd&\vdots& \vdots & \vdots\\
s_{n-1,1}& 0 & \ldots & -s_{32}& -s_{22}& -s_{12}\\
0& -s_{n-1,1} & \ldots & -s_{31}& -s_{21}& -s_{11}\\
\end{bmatrix}
\begin{bmatrix}
&&&&&\lambda\\
&&&&\lambda&1\\
&&&\lambda&1&\\
&&\ddd&\ddd&&\\
&\lambda&1&&&\\
\lambda&1&&&&\\
\end{bmatrix}} \\
{\footnotesize
+\begin{bmatrix}
&&&&&\lambda\\
&&&&\lambda&1\\
&&&\lambda&1&\\
&&\ddd&\ddd&&\\
&\lambda&1&&&\\
\lambda&1&&&&\\
\end{bmatrix}
\begin{bmatrix}
s_{11}& s_{21} & \ldots &s_{n-2,1}&s_{n-1,1}& 0\\
s_{12}& s_{22} & \ldots &s_{n-2,2}&0& -s_{n-1,1}\\
s_{13}& s_{23} & \ldots &0&-s_{n-2,2}& -s_{n-2,1}\\
\vdots&\vdots&\ddd&\vdots& \vdots & \vdots\\
s_{1,n-1}& 0 & \ldots & -s_{23}& -s_{22}& -s_{21}\\
0& -s_{1,n-1} & \ldots & -s_{13}& -s_{12}& -s_{11}\\
\end{bmatrix}}\\ \label{34hd}
{\footnotesize
=\begin{bmatrix}
0& 0& s_{1,n-1}& s_{1,n-2}& \ldots& s_{13}& s_{12}\\
0& -2s_{1,n-1}& -s_{1,n-2}& s_{2,n-2}-s_{1,n-3}& \ldots& s_{23}-s_{12}& s_{22}-s_{11}\\
s_{1,n-1}& -s_{1,n-2}& -2s_{2,n-2}& -s_{2,n-3}&  \ldots& s_{33}-s_{22}& s_{32}-s_{21}\\
s_{1,n-2}& s_{2,n-2}-s_{1,n-3}& -s_{2,n-3}& -2s_{3,n-3}& \ldots & s_{43}-s_{32}& s_{42}-s_{31}\\
\vdots&\vdots&\vdots&\vdots& \ddots&\vdots & \vdots\\
s_{13}& s_{23}-s_{12}& s_{33}-s_{22}& s_{43}-s_{32}& \ldots& -2s_{n-2,2}& -s_{n-2,1}\\
s_{12}& s_{22}-s_{11}& s_{32}-s_{21}& s_{42}-s_{31}& \ldots& -s_{n-2,1}& -2s_{n-1,1}
\end{bmatrix}
}
\end{multline}\noindent The upper part of each anti-diagonal of $\delta B$ has unique variables. Thus adding $\delta B$ we reduce each anti-diagonal of $B$ independently. We start from the upper left hand corner for each of the first $n$ anti-diagonals (note that two first anti-diagonals consist just of zeros); we have the following system of equations: 
\begin{equation} \label{firstm} 
\small{
\left[
 \begin{matrix}
1&&&\\
-1&1&&\\
&\ddots&\ddots&\\
&&-1&1\\
&&&-\alpha
\end{matrix}
\right]
\left[
 \begin{matrix}
s_{-,-} \\
s_{-,-} \\
\vdots \\
s_{-,-}
\end{matrix}
\right]
=
\left[
 \begin{matrix}
b_1 \\
b_2 \\
\vdots \\
b_{t}
\end{matrix}
\right], \quad
}
\end{equation}
where $\alpha = 2$ for odd anti-diagonals, $\alpha = 1$ for even anti-diagonals,  respectively, each $s_{-,-}$ denotes the corresponding entry of \eqref{34hd} (we skip writing the subindices explicitly and only write ``${-,-}$'', it will allow us to refer to \eqref{firstm} in following sections), and each $b_i$ denotes the corresponding entry of $B$. The matrix of the system \eqref{firstm} has $t-1$ columns and $t$ rows.
By the Kronecker--Capelli theorem, 
the system \eqref{firstm} does not have a solution, since the rank of \eqref{firstm} is equal to $(t-1)$ but the rank of the extended matrix of the system is $t$. Nevertheless, if we turn down the first or the last equation of the system
(i.e. we do not set the first or the last element of the corresponding anti-diagonal of $B_{21}$ to zero), then \eqref{firstm} will have a solution. We chose to turn down the last equation to set to zero more elements (on odd anti-diagonals). 
For the remaining $(n-1)$ anti-diagonals we have a system of equations like \eqref{firstm} without the first equation, which has a solution. 
Therefore we can reduce the matrix $B$ to \eqref{Hdef}. 

The result does not
depend on $\lambda$ therefore $ {\cal D}({\cal H}_n(\lambda))=(0,0^{\nwarrow\!\!\!\! \text{\raisebox{1.5pt}{$\nwarrow$}} \!\!\!\! \text{\raisebox{3pt}{$\nwarrow$}}})$ and ${\cal D}({\cal K}_n)=(0^{\nwarrow\!\!\!\! \text{\raisebox{1.5pt}{$\nwarrow$}} \!\!\!\! \text{\raisebox{3pt}{$\nwarrow$}}},0)$.

\hide{

Notably, ${\cal D}({\cal H}_{n}(\lambda))$ and ${\cal D}({\cal K}_{n})$ are the same up to the permutation of matrices.

which has a solution by the Kronecker-Capelli theorem but for each half of the fist $n$ anti-diagonals we have a system of equations with the matrix

where $x_1 \ldots x_k$ are corresponding elements of $B$.  If we drop the first or the last equation of the system
then it will have a solution. 

\begin{equation*} 
\small{
\left[
 \begin{matrix}
1&&&0\\
-1&1&&\\
&\ddots&\ddots&\\
&&-1&1\\
0&&&-2
\end{matrix}
\right]
\left[
 \begin{matrix}
s_{1,k+1} \\
s_{2k} \\
\vdots \\
s_{tt}
\end{matrix}
\right]
=
\left[
 \begin{matrix}
b_1 \\
b_2 \\
\vdots \\
b_{t-1}
\end{matrix}
\right] \quad \text{for odd anti-diagonals},
}
\end{equation*}
}

\subsubsection{Diagonal blocks ${\cal D}({\cal L}_{n})$}
\label{dln}
Similarly to Section \ref{dhndkn}, i.e. using Lemma \ref{thekd}(i), we prove that
each pair $(A,B)=\bigg( \begin{bmatrix}
A_{11}&A_{12} \\
A_{21}&A_{22}
\end{bmatrix},
\begin{bmatrix}
B_{11}&B_{12} \\
B_{21}&B_{22}
\end{bmatrix}
\bigg)$ of symmetric
$(2n+1)$-by-$(2n+1)$ matrices can be reduced
to \eqref{Ldef}
by adding
{\small 
\begin{multline}\label{moh}
\delta (A,B)=\bigg( \begin{bmatrix}
\delta A_{11}&\delta A_{12} \\
\delta A_{21}&\delta A_{22}
\end{bmatrix},
\begin{bmatrix}
\delta B_{11}&\delta B_{12} \\
\delta B_{21}&\delta B_{22}
\end{bmatrix}
\bigg)\\ =\begin{bmatrix}
S_{11}^T&S_{21}^T
 \\ S_{12}^T&S_{22}^T
\end{bmatrix}
\bigg( \begin{bmatrix}
0&F_n^T \\
F_n&0
\end{bmatrix},
\begin{bmatrix}
0&G_n^T \\
G_n&0
\end{bmatrix}
\bigg) + \bigg( \begin{bmatrix}
0&F_n^T \\
F_n&0
\end{bmatrix},
\begin{bmatrix}
0&G_n^T \\
G_n&0
\end{bmatrix}  \bigg)
\begin{bmatrix}
S_{11}&S_{12}
 \\ S_{21}&S_{22}
\end{bmatrix}
    \\=
\bigg( \begin{bmatrix}
S_{21}^TF_n+F_n^TS_{21}&
S_{11}^TF_n^T+F_n^TS_{22}\\
S_{22}^TF_n+F_nS_{11}&
S_{12}^TF_n^T+F_nS_{12}
\end{bmatrix}, 
\begin{bmatrix}
S_{21}^TG_n+G_n^TS_{21}&
S_{11}^TG_n^T+G_n^TS_{22}\\
S_{22}^TG_n+G_nS_{11}&
S_{12}^TG_n^T+G_nS_{12}
\end{bmatrix} \bigg),
\end{multline}}\noindent where $S=[S_{ij}]_{i,j=1}^2$
is an arbitrary $(2n+1) \times (2n+1)$ matrix.
Notably, each pair of blocks $(A_{ij},B_{ij}), i,j=1,2$ of $(A,B)$ is changed independently. 
The pair of blocks $(A_{11},B_{11})$ is changed by adding 
$(S_{21}^TF_n+F_n^TS_{21},S_{21}^TG_n+G_n^TS_{21})$. 
Adding $\delta A_{11}=S_{21}^TF_n+F_n^TS_{21}$
we reduce each $(n+1)$-by-$(n+1)$ symmetric matrix $A_{11}$ to
 $0_{*}$. To preserve $A_{11}$, we must hereafter take $S_{21}$ such that
 $F_n^TS_{21}=-S_{21}^TF_n$. Therefore 
$$S_{21}=\begin{bmatrix}
0&s_{12}&s_{13}&\ldots & s_{1n}&0 \\
-s_{12}&0&s_{23}&\ldots & s_{2n}&0 \\
-s_{13}&-s_{23}&0&\ldots & s_{3n}&0 \\
\vdots&\vdots&\vdots&\ddots & \vdots &\vdots  \\
-s_{1n}&-s_{2n}&-s_{3n}&\ldots & 0&0 \\
\end{bmatrix},$$
i.e. $S_{21}$ without the last column is skew symmetric.
Now we reduce $B_{11}$ by adding
{\small
\begin{multline*}
\delta B_{11}={\footnotesize \begin{bmatrix}
0&-s_{12}&-s_{13}&\ldots & -s_{1n}\\
s_{12}&0&-s_{23}&\ldots & -s_{2n}\\
s_{13}&s_{23}&0&\ldots & -s_{3n}\\
\vdots&\vdots&\vdots&\ddots & \vdots \\
s_{1n}&s_{2n}&s_{3n}&\ldots & 0\\
0&0&0&\ldots &0
\end{bmatrix} }
\begin{bmatrix}
0&1&&\\
&\ddots&\ddots&\\
&& 0&1\\
\end{bmatrix}
 \\ +
\begin{bmatrix}
0&&\\
1&\ddots &\\
&\ddots & 0\\
& & 1\\
\end{bmatrix}{\footnotesize
\begin{bmatrix}
0&s_{12}&s_{13}&\ldots & s_{1n}&0 \\
-s_{12}&0&s_{23}&\ldots & s_{2n}&0 \\
-s_{13}&-s_{23}&0&\ldots & s_{3n}&0 \\
\vdots&\vdots&\vdots&\ddots & \vdots &\vdots  \\
-s_{1n}&-s_{2n}&-s_{3n}&\ldots & 0&0 \\
\end{bmatrix}}
    \\=
{\footnotesize \begin{bmatrix}
0& 0& -s_{12}& \ldots & & -s_{1,n-1}& -s_{1n} \\
0& 2s_{12}& s_{13}&\ldots & & s_{1n}-s_{2,n-1}& -s_{2n}\\
-s_{12}& s_{13}& 2s_{23}& \ddots && s_{2n}-s_{3,n-1}& -s_{3n}\\
\vdots&\vdots&\ddots&\ddots & \ddots & \vdots& \vdots\\
-s_{1,n-2}& s_{1,n-1}-s_{2,n-2}& &\ddots & 2s_{n-2,n-1}& s_{n-2,n}& -s_{n-1,n}\\
-s_{1,n-1}& s_{1n}-s_{2,n-1}& &\ldots & s_{n-2,n}& 2s_{n-1,n}& 0\\
-s_{1n}& -s_{2n}& &\ldots & -s_{n-1,n}& 0& 0
\end{bmatrix}}.
\end{multline*}}\noindent Similarly to \eqref{34hd}, we reduce $B_{11}$ anti-diagonal-wise and the system of equations corresponding to each anti-diagonal is \eqref{firstm}. 
Therefore we reduce $B_{11}$ to the form $0^{\nwsearrow\!\!\!\! \text{\raisebox{1.5pt}{$\nwsearrow$}} \!\!\!\! \text{\raisebox{3pt}{$\nwsearrow$}}}$.


The pair of blocks $(A_{21},B_{21})$ is reduced by adding
$\delta (A_{21},B_{21}) = (S_{22}^TF_n+F_nS_{11},S_{22}^TG_n+G_nS_{11}),$
where $S_{11}$ and $S_{22}$ are arbitrary matrices of the corresponding size. Obviously, that adding $S_{22}^TF_n+F_nS_{11}$ we reduce $A_{21}$ to zero. To preserve $A_{21}$, we must hereafter take $S_{11}$ and $S_{22}$ such that
$F_nS_{11}=-S_{22}^TF_n$. Thus 
$$S_{11}=\begin{bmatrix}
&&& 0\\
&-S^T_{22}&& 0\\
&&& \vdots\\
&&& 0\\
-y_{1}&-y_{2}&\ldots & -y_{n+1}\\
\end{bmatrix}$$
and we reduce $B_{12}$ by adding
{\small
\begin{multline*}
\delta B_{12}=S_{22}^TG_n+G_nS_{11}=
{\footnotesize \begin{bmatrix}
s_{11}&s_{12}&s_{13}&\ldots & s_{1n}\\
s_{21}&s_{22}&s_{23}&\ldots & s_{2n}\\
s_{31}&s_{32}&s_{33}&\ldots & s_{3n}\\
\hdotsfor{5}\\
s_{n1}&s_{n2}&s_{n3}&\ldots & s_{nn}\\
\end{bmatrix}}
\begin{bmatrix}
0&1&&\\
&\ddots&\ddots&\\
&& 0&1\\
\end{bmatrix}
 \\ +
\begin{bmatrix}
0&1&&\\
&\ddots&\ddots&\\
&& 0&1\\
\end{bmatrix}
{\footnotesize
\begin{bmatrix}
-s_{11}&-s_{12}&-s_{13}&\ldots & -s_{1n}& 0\\
-s_{21}&-s_{22}&-s_{23}&\ldots & -s_{2n}& 0\\
-s_{31}&-s_{32}&-s_{33}&\ldots & -s_{3n}& 0\\
\hdotsfor{6}\\
-s_{n1}&-s_{n2}&-s_{n3}&\ldots & -s_{nn}&0\\
y_{1}&y_{2}&y_{3}&\ldots &y_{n} & y_{n+1}\\
\end{bmatrix}}
    \\=
    {\footnotesize
\begin{bmatrix}
-s_{21}& s_{11}-s_{22}& s_{12}-s_{23}&  \ldots & s_{1,n-1}-s_{2n}& s_{1n} \\
-s_{31}& s_{21}-s_{32}& s_{22}-s_{33}&  \ldots & s_{2,n-1}-s_{3n}& s_{2n} \\
-s_{41}& s_{31}-s_{42}& s_{32}-s_{43}&  \ldots & s_{3,n-1}-s_{4n}& s_{3n} \\
\hdotsfor{6}\\
-s_{n1}& s_{n-1,1}-s_{n2}& s_{n-1,2}-s_{n3}& \ldots & s_{n,n-1}-s_{nn}& s_{n-1,n} \\
y_1& s_{n1}+y_2 & s_{n2}+y_3 &  \ldots & s_{n,n+1}+y_n & s_{nn}+y_{n+1} \\
\end{bmatrix}}.
\end{multline*}}
Clearly, we can set $B_{12}$ to zero by adding $\delta B_{12}$
(diagonal-wise). 

The reduction of $(A_{21},B_{21})$ follows from the above, since $(A_{12}^T,B_{12}^T)= (A_{21},B_{21})$ and $(\delta A_{12}^T, \delta B_{12}^T)= (\delta A_{21}, \delta B_{21})$.

To the pair of blocks $(A_{22},B_{22})$ we can add
$\delta (A_{22},B_{22})=(S_{12}^TF_n^T+F_nS_{12},S_{12}^TG_n^T+G_nS_{12})$
in which $S_{12}$ is an arbitrary $(n+1)$-by-$n$
matrix. Obviously, that adding $S_{21}^TF_n^T+F_nS_{21}$
we reduce each $n$-by-$n$ symmetric matrix $A_{22}$ to
zero. To preserve $A_{22}$, we must hereafter take $S_{21}$ such
that  $F_nS_{12}=-S_{12}^TF_n^T$. Therefore 
$$S_{12}=\begin{bmatrix}
0&s_{12}&s_{13}&\ldots & s_{1n}& \\
-s_{12}&0&s_{23}&\ldots & s_{2n}& \\
-s_{13}&-s_{23}&0&\ldots & s_{3n}& \\
\vdots&\vdots&\vdots&\ddots & \vdots & \\
-s_{1n}&-s_{2n}&-s_{3n}&\ldots & 0& \\
s_{1,n+1}&s_{2,n+1}&s_{3,n+1}&\ldots & s_{n,n+1}& \\
\end{bmatrix},$$
i.e. $S_{12}$ without the last row is skew symmetric.
Now we reduce $B_{22}$ by adding
{\small
\begin{multline*}
\delta B_{22}= 
{\footnotesize
\begin{bmatrix}
0&-s_{12}&-s_{13}&\ldots & -s_{1n}& s_{1,n+1}\\
s_{12}&0&-s_{23}&\ldots & -s_{2n}& s_{2,n+1}\\
s_{13}&s_{23}&0&\ldots & -s_{3n}& s_{3,n+1}\\
\vdots&\vdots&\vdots&\ddots & \vdots & \vdots\\
s_{1n}&s_{2n}&s_{3n}&\ldots & 0& s_{n,n+1}\\
\end{bmatrix}}
\begin{bmatrix}
0&&\\
1&\ddots &\\
&\ddots & 0\\
& & 1\\
\end{bmatrix}
 \\ +
\begin{bmatrix}
0&1&&\\
&\ddots&\ddots&\\
&& 0&1\\
\end{bmatrix}
{\footnotesize
\begin{bmatrix}
0&s_{12}&s_{13}&\ldots & s_{1n} \\
-s_{12}&0&s_{23}&\ldots & s_{2n} \\
-s_{13}&-s_{23}&0&\ldots & s_{3n} \\
\vdots&\vdots&\vdots&\ddots & \vdots  \\
-s_{1n}&-s_{2n}&-s_{3n}&\ldots & 0 \\
s_{1,n+1}&s_{2,n+1}&s_{3,n+1}&\ldots & s_{n,n+1} \\
\end{bmatrix}}
    \\=
    {\footnotesize
 \begin{bmatrix}
-2s_{12}& -s_{13}&  \ldots& -s_{1n}+s_{2,n-1}& s_{1,n+1}+s_{2n}\\
-s_{13}& -2s_{23}&  \ldots& -s_{2n}+s_{3,n-1}& s_{2,n+1}+s_{3n}\\
\vdots&\vdots&\ddots&\vdots & \vdots  \\
-s_{1n}+s_{2,n-1}& -s_{2n}+s_{3,n-1}& \ldots& -2s_{n-1,n}& s_{n-1,n+1}\\
s_{1,n+1}+s_{2n}& s_{2,n+1}+s_{3n}& \ldots & s_{n-1,n+1}& 2s_{n,n+1}
\end{bmatrix}}. 
\end{multline*}}\noindent Since each anti-diagonal of $\delta B_{22}$ has unique variables, we reduce $B_{22}$ anti-diagonal-wise. For each half of any
anti-diagonal we have the system of equations (\eqref{firstm} without the first equation),
which has a solution. Therefore we reduce each anti-diagonal of $(A_{22},B_{22})$ to zero and so we reduce $(A_{22},B_{22})$ to zero.

Hence ${\cal D}({\cal L}_m)$ is equal to \eqref{Ldef}.

\subsection{Off-diagonal
blocks of matrices of
$\cal D$ that
correspond to summands
of
$(A,B)_{\text{can}}$
of the same type}

In this section we check the
condition (ii) of
Lemma \ref{thekd} for
off-diagonal blocks of
$\cal D$ defined in
Theorem
\ref{teo2}(ii). The horizontal and
vertical strips of these diagonal blocks contain summands of
$(A,B)_{\text{can}}$ of the same type.

\subsubsection{Pairs of
blocks ${\cal
D}({\cal H}_n(\mu),\,
{\cal H}_m(\lambda))$ and
${\cal D}({\cal K}_n,{\cal K}_m)$}
\label{sub4}

Due to Lemma
\ref{thekd}(ii), it
suffices to prove that
each group of four matrices $((A,B),(A^T,B^T))$
can be reduced to
exactly one group of
the form \eqref{lsiu1} (or, respectively \eqref{lsiu2})
by adding
\[
(R^T{\cal H}_m(\lambda)
+{\cal H}_n(\mu)S, S^T
{\cal H}_n(\mu)+ {\cal H}_m(\lambda)
R),\quad S\in
 {\mathbb C}^{n\times m}, R \in
 {\mathbb C}^{m\times n}.
\]
Obviously, if we reduce $(A,B)$ then $(A^T,B^T)$ is reduced automatically. 
The matrix pair $(A,B)$ is reduced by adding 
$$
\delta (A,B) = R^T{\cal H}_m(\lambda)
+{\cal H}_n(\mu)S =(R^T \Delta_m + \Delta_n S,
R^T \Lambda_m(\lambda)+ \Lambda_n(\mu) S).
$$
It is clear that we can set $A$ to zero. To preserve $A$, we must
hereafter take $R$ and $S$ such that
\[
R^T \Delta_m + \Delta_n S=0 \Leftrightarrow R^T =-\Delta_n S\Delta_m.
\]
It follows that $B$ is reduced by adding
\begin{multline*}
\delta B =R^T \Lambda_m(\lambda)+ \Lambda_n(\mu) S=-\Delta_n S\Delta_m\Lambda_m(\lambda)+ \Lambda_n(\mu) S \\ =
\begin{cases}
(\lambda-\mu)s_{n-i+1,j}-s_{n-i+1,j-1}+s_{n-i+2,j}&  \text{if} \ \ \  2 \leq i \leq n , \ \ 2 \leq j \leq m, \\
(\lambda-\mu)s_{n-i+1,j}-s_{n-i+1,j-1}&  \text{if} \ \ \  2 \leq j \leq m, \ \ i=1, \\
(\lambda-\mu)s_{n-i+1,j}+s_{n-i+2,j}&  \text{if} \ \ \ 2 \leq i \leq n, \ \ j=1, \\
(\lambda-\mu)s_{n1}& \text{if} \ \ \  i=j=1.
\end{cases}
\end{multline*}
We have the system of $nm$ equations that has a solution if $\lambda \neq \mu$.
Hence in the case $\lambda \neq \mu$ we can set any
pair $(A,B)$ of $n$-by-$m$ matrices to zero.

For the case $\lambda = \mu$ we have 
\begin{multline*}
\delta B = R^T \Lambda_m(\lambda)+ \Lambda_n(\lambda) S=-\Delta_n S\Delta_m\Lambda_m(\lambda)+ \Lambda_n(\lambda) S \\ =
{\footnotesize
\begin{bmatrix}
0& -s_{n1}& -s_{n2}& -s_{n3}& -s_{n4}& \dots & -s_{n,m-1} \\
s_{n1}& s_{n2}-s_{n-1,1}& s_{n3}-s_{n-1,2}& s_{n4}-s_{n-1,3}& s_{n5}-s_{n-1,4}& \dots & s_{nm}-s_{n-1,m-1}\\
\hdotsfor{7}\\
s_{41}& s_{42}-s_{31}& s_{43}-s_{32}& s_{44}-s_{33}& s_{45}-s_{34}& \dots & s_{4m}-s_{3,m-1} \\
s_{31}& s_{32}-s_{21}& s_{33}-s_{22}& s_{34}-s_{23}& s_{35}-s_{24}& \dots & s_{3m}-s_{2,m-1} \\
s_{21}& s_{22}-s_{11}& s_{23}-s_{12}& s_{24}-s_{13}& s_{25}-s_{14}& \dots & s_{2m}-s_{1,m-1}
\end{bmatrix}}.
\end{multline*}
By adding (anti-diagonal-wise) $\delta B$ we reduce $B$ to the form $0^{\nwarrow}$.

We have shown that ${\cal D}({\cal H}_m(\mu),\, {\cal H}_n(\lambda))$ is equal to \eqref{lsiu1} as well as that ${\cal D}({\cal K}_m,\, {\cal K}_n)$ is equal to \eqref{lsiu2}.
\subsubsection{Pairs of
blocks ${\cal D}({\cal L}_n, {\cal L}_m)$ }
\label{sub42}

Due to Lemma
\ref{thekd}(ii), it
suffices to show that
each four matrices $((A,B),(A^T,B^T))$
can be reduced to
the form \eqref{lsiu3}
by adding
\[
(R^T {\cal L}_m+{\cal L}_nS, S^T {\cal L}_n+{\cal L}_m R),\quad S\in
 {\mathbb C}^{2n+1\times 2m+1},\ R\in
 {\mathbb C}^{2m+1\times 2n+1}.
\]
It is enough to reduce only $(A,B)$ and the pair $(A^T,B^T)$ is reduced automatically.
{\small 
\begin{multline*}
\delta (A,B) = \left( \begin{bmatrix}
\delta A_{11}&\delta A_{12}\\
\delta A_{21}&\delta A_{22}\\
\end{bmatrix},
\begin{bmatrix}
\delta B_{11}&\delta B_{12}\\
\delta B_{21}&\delta B_{22}\\
\end{bmatrix}\right)= R^T{\cal L}_m
+{\cal L}_n S \\ = \left(R^T
\begin{bmatrix}
0&F_m^T\\
F_m&0\\
\end{bmatrix}
+
\begin{bmatrix}
0&F_n^T\\
F_n&0\\
\end{bmatrix}
S,R^T
\begin{bmatrix}
0&G_m^T\\
G_m&0\\
\end{bmatrix}
+
\begin{bmatrix}
0&G_n^T\\
G_n&0\\
\end{bmatrix}
S \right)
\\ =  \bigg( \begin{bmatrix}
R_{12}^TF_m+F^T_nS_{21}&R_{11}^TF^T_m+F^T_nS_{22}\\
R_{22}^TF_m+F_nS_{11}&R_{21}^TF^T_m+F_nS_{12}\\
\end{bmatrix},  
\begin{bmatrix}
R_{12}^TG_m+G^T_nS_{21}&R_{11}^TG^T_m+G^T_nS_{22}\\
R_{22}^TG_m+G_nS_{11}&R_{21}^TG^T_m+G_nS_{12}\\
\end{bmatrix} \bigg).
\end{multline*}}\noindent First we reduce the pair $(A_{11},B_{11})$. Easy to see that by adding
$\delta A_{11}$ we can reduce $A_{11}$ to $0_{*}$.
To preserve $A_{11}$, we must hereafter take $R_{12}$ and $S_{21}$ such that
$R^T_{12}F_m=-F^T_nS_{21}$. This means that 
\begin{equation*}
R^T_{12}=
\begin{bmatrix}
&-Q&\\
0& \ldots &0\\
\end{bmatrix} \text{ and }
S_{21}=
\begin{bmatrix}
&&0\\
Q&& \vdots\\
&&0\\
\end{bmatrix}, \text{where $Q$ is any $n$-by-$m$ matrix.}
\end{equation*}
Hence
{\small 
\begin{multline*}
\delta B_{11} = R^T_{12}G_m+G^T_nS_{21}=
\begin{bmatrix}
&-Q&\\
0& \ldots &0\\
\end{bmatrix}
G_m+G_n^T
\begin{bmatrix}
&&0\\
Q&& \vdots\\
&&0\\
\end{bmatrix} =\\
{\footnotesize
\begin{bmatrix}
0& -q_{11}& -q_{12}& -q_{13}& \ldots & -q_{1,m-1}& -q_{1m} \\
q_{11}& q_{12}-q_{21}& q_{13}-q_{22}& q_{14}-q_{23}& \ldots& q_{1m}-q_{2,m-1}& -q_{2m} \\
q_{21}& q_{22}-q_{31}& q_{23}-q_{32}& q_{24}-q_{33}& \ldots& q_{2m}-q_{3,m-1}& -q_{3m} \\
q_{31}& q_{32}-q_{41}& q_{33}-q_{42}& q_{34}-q_{43}& \ldots& q_{3m}-q_{4,m-1}& -q_{4m} \\
\hdotsfor{7}\\
q_{n-1,1}& q_{n-1,2}-q_{n1}& q_{n-1,3}-q_{n2}& q_{n-1,4}-q_{n3}& \ldots& q_{n-1,m}-q_{n,m-1}& -q_{nm} \\
q_{n1}& q_{n2}& q_{n3}& q_{n4} & \ldots& q_{nm}& 0
\end{bmatrix}}.
\end{multline*}}\noindent
By adding $\delta B_{11}$ we can set each element of
$B_{11}$ to zero except either the first column and the
last row or the first row and the last column.

Now we consider $(A_{12},B_{12})$. We can set $A_{12}$ to zero by
adding $\delta A_{12}$.
To preserve $A_{12}$, we must hereafter take $R_{11}$ and $S_{22}$
such that $R^T_{11}F_m^T=-F_n^TS_{22}$. Thus
\begin{equation*}
R^T_{11}=
\begin{bmatrix}
&&&h_1\\
&-S_{22}&& \vdots\\
0&\ldots&0&h_{n+1}\\
\end{bmatrix},
\end{equation*}
where $S_{22}$ is any $n$-by-$m$ matrix.
Therefore
\begin{multline*}
\delta B_{12}= R^T_{11}G^T_m+G^T_nS_{22}=
\begin{bmatrix}
&&&h_1\\
&-S_{22}&& \vdots\\
0&\ldots&0&h_{n+1}\\
\end{bmatrix}
G_m^T+G_n^TS_{22} =\\
{\footnotesize
\begin{bmatrix}
-s_{12}& -s_{13}& -s_{14}& \ldots& -s_{1m}& h_{1} \\
s_{11}-s_{22}& s_{12}-s_{23}& s_{13}-s_{24}& \ldots& s_{1,m-1}-s_{2m}& h_{2}+s_{1m}\\
\hdotsfor{6}\\
s_{n-1,1}-s_{n2}& s_{n-1,2}-s_{n3}& s_{n-1,3}-s_{n4}& \ldots& s_{n-1,m-1}-s_{nm}& h_{n}+s_{n-1,m}\\
s_{n1}& s_{n2}& s_{n3}& \ldots& s_{n,m-1}& h_{n+1}+s_{nm}
\end{bmatrix}}.
\end{multline*}
If $n \geq m-1$ then we can set $B_{12}$ to zero
by adding $\delta B_{12}$. If $n < m-1$ then we cannot
set the block $B_{12}$ to zero. Then we start the diagonal-wise reduction from the down left hand corner and set the first $(n-1)$ diagonals of $B_{12}$ to zero. We set the next $(m-n)$
diagonals of $B_{12}$ to zero, except the last element of each of them.
The remaining $n$ diagonals
we can set to zero too. Hence we reduce
this pair of blocks $(A_{12},B_{12})$ to the form $(0,0^{\boxminus}_{n+1,m})$.

Now we reduce $(A_{21},B_{21})$. We can set $A_{21}$ to zero
by adding $\delta A_{21}$.
To preserve $A_{21}$, we must hereafter take $R_{22}$ and $S_{11}$
such that $R^T_{22}F_m=-F_nS_{11}$, i.e. 
\begin{equation*}
S_{11}=
\begin{bmatrix}
&&&0\\
&-R^T_{22}&& \vdots\\
&&&0\\
h_1&\ldots&&h_{m+1}\\
\end{bmatrix},
\end{equation*}
where $R^T_{22}$ is any $n$-by-$m$ matrix.
Therefore
{\small 
\begin{multline*}
\delta B_{21}= R^T_{22}G_m+G_nS_{11}=
R^T_{22}G_m+G_n
\begin{bmatrix}
&&&0\\
&-R^T_{22}&& \vdots\\
&&&0\\
h_1&\ldots&&h_{m+1}\\
\end{bmatrix} \\ =
{\footnotesize
\begin{bmatrix}
-r_{21}& r_{11}-r_{22}& r_{12}-r_{23}&\ldots & r_{1,m-1}-r_{2m}& r_{1m}\\
-r_{31}& r_{21}-r_{32}& r_{22}-r_{33}&\ldots & r_{2,m-1}-r_{3m}& r_{2m}\\
-r_{41}& r_{31}-r_{42}& r_{32}-r_{43}&\ldots & r_{3,m-1}-r_{4m}& r_{3m}\\
\hdotsfor{6}\\
-r_{n1}& r_{n-1,1}-r_{n2}& r_{n-1,2}-r_{n3}&\ldots & r_{n-1,m-1}-r_{nm}& r_{n-1,m}\\
h_{1}& r_{n1}+h_{2}& r_{n2}+b_{3}&\ldots & r_{n,m-1}+h_{m}& r_{nm}+h_{m+1}\\
\end{bmatrix}}.
\end{multline*}}\noindent If $m+1 \geq n$ then we can set $B_{21}$ to zero
by adding $\delta B_{21}$. If $m+1 < n$ then we cannot
set the whole $B_{21}$ to zero. By adding $\delta (A_{21},B_{21})$ and arguing as in the previous case we reduce $(A_{21},B_{21})$ to $(0,0^{\boxminus T}_{m+1,n})$.

Now let us consider the pair $(A_{22},B_{22})$. We can set $A_{22}$ to zero
by adding $\delta A_{22}$. To preserve $A_{22}$, we must hereafter
take $R_{21}$ and $S_{12}$ such that $R^T_{21}F_m^T=-F_nS_{12}$, i.e. 
\begin{equation*}
S_{12}=
\begin{bmatrix}
&-Q&\\
g_{1}& \ldots &g_{m}\\
\end{bmatrix} \text{ and }
R^T_{21}=
\begin{bmatrix}
&&h_1\\
Q&& \vdots\\
&&h_n\\
\end{bmatrix}, \text{where $Q$ is any $n$-by-$m$ matrix.}
\end{equation*}
It follows that
{\small 
\begin{multline*}
\delta B_{22}=R^T_{21}G^T_m+G_nS_{12}=
\begin{bmatrix}
&&h_1\\
Q&& \vdots\\
&&h_n\\
\end{bmatrix}
G_m^T+G_n
\begin{bmatrix}
&-Q&\\
g_{1}& \ldots &g_{m}\\
\end{bmatrix} \\
=
{\footnotesize
\begin{bmatrix}
q_{12}-q_{21}&q_{13}-q_{22}&q_{14}-q_{23}&  \ldots & q_{1m}-q_{2,m-1}& h_1-q_{2m}\\
q_{22}-q_{31}&q_{23}-q_{32}&q_{24}-q_{33}&  \ldots & q_{2m}-q_{3,m-1}& h_2-q_{3m}\\
q_{32}-q_{41}&q_{33}-q_{42}&q_{34}-q_{43}&  \ldots & q_{3m}-q_{4,m-1}& h_3-q_{4m}\\
\hdotsfor{6}\\
q_{n-1,2}-q_{n1}&q_{n-1,3}-q_{n2}&q_{n-1,4}-q_{n3}&  \ldots & q_{n-1,m}-q_{n,m-1}& h_{m-1}-q_{nm}\\
q_{n2}+g_1&q_{n3}+g_2&q_{n4}+g_3&  \ldots & q_{nm}-g_{n-1}& g_n+h_m\\
\end{bmatrix}}.
\end{multline*}}
We can set each anti-diagonal of $B_{22}$ to zero independently.
Thus adding $\delta B_{22}$ we reduce $B_{22}$ to zero.

Hence ${\cal D}({\cal L}_m, {\cal L}_n)$ has the form \eqref{lsiu3}.

\subsection{Off-diagonal
blocks of matrices of
$\cal D$ that
correspond to summands
of
$(A,B)_{\text{can}}$
of different types}

At last, we check the
condition (ii) of
Lemma \ref{thekd} for
off-diagonal blocks of
$\cal D$ defined in
Theorem
\ref{teo2}(iii); the
diagonal blocks of
their horizontal and
vertical strips
contain summands of
$(A,B)_{\text{can}}$ of different
types.

\subsubsection{Pairs of
blocks ${\cal D}({\cal H}_n(\lambda), {\cal K}_m)$ }
\label{sub7}

Due to Lemma
\ref{thekd}(ii), it
suffices to prove that
each group of four matrices $((A,B),(A^T,B^T))$
can be reduced to
exactly one group of
the form \eqref{kut}
by adding
\[
(R^T {\cal K}_m+ {\cal H}_n(\lambda) S, S^T {\cal H}_n(\lambda)
+{\cal K}_m R),\quad S\in
 {\mathbb C}^{n\times m},\ R\in
 {\mathbb C}^{m\times n}.
\]
Clearly, we can reduce only $(A,B)$ and the pair $(A^T,B^T)$
is reduced automatically. We have
$$
\delta (A,B) = (\delta A,\delta B) = R^T {\cal K}_m+ {\cal H}_n(\lambda) S = \\ (R^T \Lambda_m(0)+\Delta_nS,
R^T \Delta_m+ \Lambda_n(\lambda)S) .
$$
We can set $A$ to zero by adding $\delta A$. To preserve $A$, we
must hereafter take $R$ and $S$ such that
\[
R^T \Lambda_m(0)+\Delta_nS=0 \Rightarrow S=-\Delta_nR^T \Lambda_m(0).
\]\noindent Thus $B$ is reduced to zero by adding 
$$\delta B = R^T \Delta_m+ \Lambda_n(\lambda)S=
R^T \Delta_m - \Lambda_n(\lambda)\Delta_nR^T \Lambda_m(0).$$
Hence ${\cal D}({\cal H}_n(\lambda), {\cal K}_m)$ is equal to zero.

\subsubsection{Pairs of
blocks ${\cal D}({\cal H}_n(\lambda), {\cal L}_m)$ }
\label{sub8}

Due to Lemma
\ref{thekd}(ii), it
suffices to prove that
each group of four matrices $((A,B),(A^T,B^T))$
can be reduced to
\eqref{hnlm}
by adding
\[
(R^T {\cal L}_m+ {\cal H}_n(\lambda) S,
S^T{\cal H}_n(\lambda) +{\cal L}_mR),\quad S\in
 {\mathbb C}^{n\times 2m+1},\ R\in
 {\mathbb C}^{2m+1\times n}.
\]
Obviously, that we can reduce only $(A,B)$ and
the pair $(A^T,B^T)$ is reduced automatically. We have 
$$
\delta (A,B)=R^T {\cal L}_m+ {\cal H}_n(\lambda) S
=\left(R^T
\begin{bmatrix}
0&F_m^T\\
F_m&0\\
\end{bmatrix}
+
\Delta_n S,R^T
\begin{bmatrix}
0&G_m^T\\
G_m&0\\
\end{bmatrix}
+
\Lambda_n(\lambda) S \right) .
$$
It is clear that we can set $A$ to zero. To preserve $A$, we
must hereafter take $R$ and $S$ such that
\[
R^T
\begin{bmatrix}
0&F_m^T\\
F_m&0\\
\end{bmatrix}
+
\Delta_n S=0
\Rightarrow
S=
-\Delta_n
\begin{bmatrix}
R^T_{11}&R^T_{21}\\
R^T_{12}&R^T_{22}\\
\end{bmatrix}
\begin{bmatrix}
0&F_m^T\\
F_m&0\\
\end{bmatrix}. 
\]
Hence $B$ is reduced by adding 
\begin{multline*}
\delta B =R^T
\begin{bmatrix}
0&G_m^T\\
G_m&0\\
\end{bmatrix}
-
\Lambda_n(\lambda) \Delta_n R^T
\begin{bmatrix}
0&F_m^T\\
F_m&0\\
\end{bmatrix}\\
=
\begin{cases}
-\lambda r_{i,n-1}-r_{i-1,n-1}&  \text{if} \ \ \ 1 \leq j \leq n, \ \ j=1, \\
-\lambda r_{i,m+1+j}-r_{i-1,m+1+j}+r_{i,m+j}&  \text{if} \ \ \  1 \leq i \leq n, \ \ 1 < j < m+1,\\
r_{in}&  \text{if} \ \ \   1 \leq i \leq n, \ \ j=m+1,\\
-\lambda r_{i,j-m-1}-r_{i-1,j-m-1}+r_{i,j-m}&  \text{if} \ \ \  1 \leq i \leq n, \ \ m+1 < j \leq 2m+1,\\
\end{cases}
\end{multline*}
where we put $r_{0t}:=0$. Adding $\delta B$ we reduce $B$ to the form $0^{\leftarrow}$.

Therefore ${\cal D}({\cal H}_n(\lambda), {\cal L}_m)$ is equal to \eqref{hnlm}.

\subsubsection{Pairs of
blocks ${\cal D}({\cal K}_n, {\cal L}_m)$ }
\label{sub9}

Due to Lemma
\ref{thekd}(ii), it
suffices to prove that
each group of four matrices $((A,B),(A^T,B^T))$
can be reduced to
\eqref{ktlm}
by adding
\[
(R^T {\cal L}_m+ {\cal K}_nS,S^T{\cal K}_n
+{\cal L}_mR),\quad S\in
 {\mathbb C}^{n\times 2m+1},\ R\in
 {\mathbb C}^{2m+1\times n}.
\]
As before, we can reduce only $(A,B)$ and
the pair $(A^T,B^T)$ is reduced automatically. We have 
$$
\delta (A,B)=R^T {\cal L}_m+ {\cal K}_n S \\
=\left(R^T
\begin{bmatrix}
0&F_m^T\\
F_m&0\\
\end{bmatrix}
+ \Lambda_n(0) S
,R^T
\begin{bmatrix}
0&G_m^T\\
G_m&0\\
\end{bmatrix}
+
\Delta_n S\right) .
$$
It is easy to check that we can set $B$ to zero. To preserve $B$, we
must hereafter take $R$ and $S$ such that
\[
R^T
\begin{bmatrix}
0&G_m^T\\
G_m&0\\
\end{bmatrix}
+
\Delta_n S=0
\Rightarrow
S=
-\Delta_n
\begin{bmatrix}
R^T_{11}&R^T_{21}\\
R^T_{12}&R^T_{22}\\
\end{bmatrix}
\begin{bmatrix}
0&G_m^T\\
G_m&0\\
\end{bmatrix}.
\]
Thus $A$ is reduced by adding
\begin{multline*}
\delta A = R^T
\begin{bmatrix}
0&F_m^T\\
F_m&0\\
\end{bmatrix}
-
\Lambda_n(0) \Delta_n R^T
\begin{bmatrix}
0&G_m^T\\
G_m&0\\
\end{bmatrix}\\
=
\begin{cases}
r_{in-1}&  \text{if} \ \ \ 1 \leq j \leq n, \ \ j=1,\\
r_{i,m+1+j}-r_{i-1,m+j}&  \text{if} \ \ \  1 \leq i \leq n, \ \ 1 < j < m+1,\\
r_{i-1,n}&  \text{if} \ \ \   1 \leq i \leq n, \ \ j=m+1,\\
r_{i,j-m-1}-r_{i-1,j-m}&  \text{if} \ \ \  1 \leq i \leq n, \ \ m+1 < j \leq 2m+1,\\
\end{cases}
\end{multline*}
where we put $r_{0t}:=0$.

Therefore ${\cal D}({\cal K}_n, {\cal L}_m)$ is equal to \eqref{ktlm}.

\section*{Acknowledgements}
The author is thankful to Vladimir V. Sergeichuk for introducing him to the area of miniversal deformations and, in particular, the problem considered in this paper. The author also thanks to the anonymous referee for the helpful suggestions.

The work was supported by the Swedish Research Council (VR) under grant E0485301, and by eSSENCE, a strategic collaborative e-Science programme funded by the Swedish Research Council.

{\footnotesize
\bibliographystyle{abbrv}
\bibliography{library}
}

\end{document}